\newtheorem{thm}{Theorem}[section]
\newtheorem{cor}{Corollary}[section]
\newtheorem{lem}{Lemma}[section]
\newtheorem{rmk}{Remark}[section]
\newtheorem{exmp}{Example}[section]
\let \var=\varphi
\let \vare=\varepsilon
\let \de=\delta
\let \th=\theta
\let \la=\lambda
\let \p=\partial
\let \q=\quad
\let \qq=\qquad
\let \smal=\smallskip
\let \dps=\displaystyle
\let \ul=\underline
\let \ul=\underline
\let \ol=\overline
\newcommand{\R}{\mathbb{R}}
\newcommand{\N}{\mathbb{N}}
\begin{document}

 %Draft - February 28, 2024 

\begin{center}
%\textbf{\Large{Permanence and stability  for generalized nonautonomous Nicholson systems - ??}}
	\textbf{\Large{Periodic solutions for a  delayed competitive chemostat model with  periodic  nutrient input and rate }}
	\end{center}

 \centerline{\scshape Teresa Faria\footnote{Work   supported by  FCT-Funda\c c\~ao para a Ci\^encia e a Tecnologia (Portugal) under project UIDB/04561/2020.}}
 
 \smal
 \centerline{
 Departamento de Matem\'atica and CMAFcIO, Faculdade de Ci\^encias, Universidade de Lisboa}
  \centerline{
Campo Grande, 1749-016 Lisboa, Portugal}
 
\centerline{  Email:
teresa.faria@fc.ul.pt}

\vskip .5cm

\begin{abstract}  
A   nonautonomous  periodic chemostat model with delays modelling $n$ species in competition  is considered. Sufficient conditions on the coefficients and consumption functions for the species are given, for both the extinction of  the species and for the existence of $n$ nontrivial and nonnegative periodic solutions.  Further criteria guarantee that the system admits at least one strictly positive periodic solution.
%A key ingredient for the analysis is the derivation of a conservation principle.
\end{abstract}

 {\it Keywords}:   chemostat model, delays, conservation principle, functional differential equations of mixed-type, periodic solutions, extinction.

{\it 2010 Mathematics Subject Classification}:  34K13, 34K20, 34K25, 92D25.

\section{Introduction}
\setcounter{equation}{0}

A chemostat is a  bioreactor used in laboratories  for the culture of microorganisms, in which the chemical environment is kept steady, simulating what occurs in nature.  It is composed of a  vessel with a culture liquid and two pumps.
 The   culture liquid  contains a mixture of nutrients and growing  microorganisms, whereas the pumps keep its
 volume   constant by a continuous feed  of fresh substrate  and effluent of the culture. %, at a fixed rate.
 % ,  at a fixed rate, and  contains nutrients and growing  microorganisms.
 % The vessel of the chemostat contains liquid with substrate and one or more microorganisms of consumers, and continuous inputs  of  nutrients  and outputs of the culture,  at a fixed rate,  keeps the volume  constant. 
 Chemostats are used in research (biology, ecology, mathematics) and industry. 
  For the last fifty years,  an array of different  models for the growth of species in  chemostats  has been proposed  and extensively analysed by mathematicians and biologists.    
The literature on classic ordinary differential equation (ODE)  chemostat systems with several species in competition is quite extensive, and includes both autonomous and nonautonomous models -- see the monographs by Smith and Waltman \cite{SmithW} and Zhao \cite{Zhaobook}, where detailed biological explanations can be found, and the selected works \cite{Fan,Smith81,SmithTh,WZ}. 
%More recently,
 %researchers have introduced delays
 Models become more realistic with the introduction of delays, which  reproduce  the maturation time for the cells of the consumer species, reflecting the time lag between consumption of nutrients and growth. Delayed autonomous chemostat models were  first proposed by Freedman et al. \cite{FSW,FSW2} for a single species and by Ellermeyer \cite{Eller} for two species. For more references on chemostat models with delays, see e.g.  \cite{Eller2,JiaZ,MRM,WX} and references therein.
 %. The literature is very extensive, here we only refer to some selected works, 
 %See   \cite{ARS20,ARS20b,ARS23,Eller,Eller2,Faria23,MRM} and references therein.  
 %More biological explanations can be found in \cite{...,SmithW} and references therein.

%  The literature on classic ODE  chemostat systems with several species in competition is quite extensive, and includes both autonomous and nonautonomous models. The models become more realistic with the introduction of delays, which reflect the time lag between consumption of nutrients and  the maturation time for the cells of the consumer species.  Several autonomous models have been proposed, see Ellermeyer... 

% A chemostat is a bioreactor composed by a vessel and two pumps, in which the chemical environment is steady, simulating what occurs in nature. 
% The  volume of  the culture liquid in the vessel is kept constant by continuous inputs  of fresh nutrients  and outputs of the culture,  at a fixed rate, and  contains nutrients and growing  microorganisms. Chemostats are used in research (biology, ecology, mathematics) and industry. 
In \cite{Eller}, Ellermeyer derived and studied the delayed  chemostat model with two microorganisms in competition given by
\begin{equation}\label{Eller}
 \begin{split}
 S'(t)&=d(S_0-S(t))-p_1(S(t))x_1(t)-p_2(S(t))x_2(t)\\
x_i'(t)&=-dx_i(t)+e^{-d\tau_i}p_i(S(t-\tau_i))x_i(t-\tau_i),\q i=1,2,\end{split}
\end{equation} 
with $S_0, d,\tau_1,\tau_2$ positive constants and $p_1,p_2\ge 0$ smooth increasing functions.
In \eqref{Eller},   $S_0$ represents the initial concentration of the substrate in the chemostat liquid free of consumers and $d$ the rate of the input of fresh nutrients; at a time $t$,
  $x_i(t)$ is the biomass for the consumer species $i$ in the vessel,  $\tau_i$ is the time for its cell-cycle  and  $p_i(x)$ its ``uptake" or ``response" function,  $i=1,\dots,n$.
Quite often,  Michaelis-Menten functional responses $p_i(x)=\frac{b_ix}{1+k_ix}$ are used in the models, but most results are valid for other strictly increasing, bounded functions. See  \cite{SmithW} for more biological insights. System \eqref{Eller} was later generalised in \cite{WX}, for the case of $n$  microorganisms. 

Adjusting the derivation of Ellermeyer's classic model  \cite{Eller} to the situation of  time-dependent  washout rate $d(t)$ and  input concentration of  substrate  $S_0(t)$,    one  derives in a straightforward way  the system below, where now the delays $ \tau_i$  appear as {\it distributed delays} in the coefficients:
\begin{equation}\label{Chem}
 \begin{split}
 S'(t)&=d(t)(S_0(t)-S(t))-\sum_{i=1}^np_i(S(t))x_i(t)\\
x_i'(t)&=-d(t)x_i(t)+e^{-\int_{t-\tau_i}^td(s)\, ds}p_i(S(t-\tau_i))x_i(t-\tau_i),\q i=1,\dots,n,\\
\end{split}
\end{equation}
%In this paper, we study the generalised  version of the   Ellermeyer's model with   time-varying washout rate $d(t)$ and  input concentration $S_0(t)$, given by 
where $t\ge t_0$, $S_0,d, p_i$ are continuous functions, $S_0(t),d(t)$ are  positive   and  $p_i(x)$ is positive for $x>0,$  $p_i(0)=0,$ $ i=1,\dots,n$. Conf. \cite{ARS20} (for  $n=1$) and \cite{Faria21} (for  $n\ge 1$). Without loss of generality, we set the initial time $t_0=0$. %The study of \eqref{Chem} was initiated in \cite{Faria23}.
The analysis of nonautonomous delayed competitive chemostat systems of the form \eqref{Chem}  was initiated by the author in \cite{Faria23}, where the partial or total extinction of the species growing in the chemostat versus their permanence were studied.
The aim of this paper is to study  the periodic case of system \eqref{Chem}, i.e., when
$S_0(t),d(t)$ are  periodic with a common period.
The  particular model with $S_0(t)\equiv S_0>0$,
 \begin{equation}\label{Chem0}
 \begin{split}
 S'(t)&=d(t)(S_0-S(t))-\sum_{i=1}^np_i(S(t))x_i(t)\\
x_i'(t)&=-d(t)x_i(t)+e^{-\int_{t-\tau_i}^td(s)\, ds}p_i(S(t-\tau_i))x_i(t-\tau_i),\q i=1,\dots,n,\\
\end{split}
\end{equation} 
will receive further attention, as it has special properties. For instance, 
one easily sees that the ordered interval ${\cal S}:=[0,S_0]\times (\R^+)^n\subset C^+$ is positively invariant for \eqref{Chem0} and that $(S_0,0,\dots,0)$ is always an equilibrium of \eqref{Chem0}, often called the {\it washout equilibrium}. 
%In \eqref{Chem},  at a time $t$, $S_0(t)$ represents the initial concentration of the substrate in the chemostat liquid free of consumers and $d(t)$ the rate of the input of fresh nutrients;
%  $x_i(t)$ is the biomass for the consumer species $i$ in the vessel,  $\tau_i$ is the time for its cell-cycle  and  $p_i(x)$ its ``uptake" or ``response" function,  $i=1,\dots,n$.
%Quite often,  Michaelis-Menten functional responses $p_i(x)=\frac{b_ix}{1+k_ix}$ are used in the models, but most results are valid for other strictly increasing, bounded functions. See e.g. \cite{SmithW} for more biological explanations. Adjusting the derivation of Ellermeyer's model  \cite{Eller} to the situation of time-dependent  washout rate and  input concentration of  substrate,    one  derives in a straightforward way that now the delays $ \tau_i$  appear as {\it distributed delays} in the coefficients of system  \eqref{Chem}. See e.g. \cite{ARS20} for the case $n=1$.

Recently, several  nonautonomous chemostat models with a single growing species and a discrete delay  have been proposed and analysed, as in   \cite{ARS20,ARS20b}. 
As far as the author knows, 
%very little has been done regarding  
%To our knowledge, 
however,  with the exception of \cite{JiaZ} there are no previous results in the literature concerning periodic solutions to  periodic chemostat systems with delays and several growing species in competition.

%The study of \eqref{Chem} was initiated in
%The analysis of nonautonomous delayed competitive chemostat systems of the form \eqref{Chem}  was initiated by the author in \cite{Faria23}, where the partial or total extinction of the species growing in the chemostat versus their permanence were studied, without assuming that the coefficients were periodic.

Motivated by the work in   \cite{Faria23},
 here   $S_0(t)$ and $d(t)$ are assumed to be $\omega$-periodic functions (for some $\omega>0$). A first aim of this paper is to refine the criteria in \cite{Faria23} for
 the asymptotic behaviour of solutions of \eqref{Chem}, in terms of either  extinction of all species or  competitive exclusion. The main goal, however, is to establish  the existence of nontri\-vial and nonnegative (and if  possible strictly positive) $\omega$-periodic solutions. These $\omega$-periodic solutions will be found as fixed points of a convenient operator acting on a closed cone of a Banach space, by using the Krasnoselskii  fixed point theorem. We recall that  in the case of permanence of positive solutions, arguments based  on Schauder fixed point theorem have been used to find  positive $\omega$-periodic solutions of delay differential equations (DDEs) \cite{Faria17, Zhao}.  Following this approach, by using a Poincar\'e map here  we shall see that, if the (uniform) persistence  of positive solutions is  imposed as an additional hypothesis, then  our main criterion on the existence of nonnegative and nontrivial $\omega$-periodic solutions also implies that at least one strictly positive $\omega$-periodic solution must exist.  
A keystone for the work presented here is  the  {\it conservation principle} stated in \cite{Faria23} (see  \cite{SmithW} for the ODE model), which enables us to eliminate the variable $S(t)$ and replace system \eqref{Chem} by an $n$-dimensional functional differential equation (FDE) of advanced-delayed type. 
Besides the conservation principle and  fixed point techniques, some major arguments  in this paper are based on comparison results from the theory of quasi-monotone FDEs \cite{Smith}.

Some alternative nonautonomous delayed chemostat models for  $n$ competing species  have been proposed. Mazenc et al. \cite{MRM} derived a  model with constant input rate and initial concentration of the nutrient, for which the stability of an asymptotic positive equilibrium and its robustness  were studied. 
On the other hand, the continuation theorem of the Leray-Schauder degree theory has  been used to find positive  periodic solution for  delayed chemostat models, either for the case of a single  \cite{ARS20,ARS20b} or for two microorganisms \cite{JiaZ} in the chemostat. In fact, in \cite{ARS20}, Amster et at.  considered the model \eqref{Chem} with $n=1$, whereas
in
\cite{ARS20b} the same authors studied the model with constant input rate $d>0$ and delay $\tau>0$ given by
\begin{equation}\label{ARS20b}
 \begin{split}
 S'(t)&=d(S_0(t)-S(t))-p(S(t))x(t)\\
x'(t)&=-dx(t)+p(S(t-\tau))x(t),\end{split}
\end{equation}
where $S_0(t)$ is continuous and periodic. In both papers,  additional requirement on $p(x)$ are imposed.
%$p(x)$ is supposed to be $C^1$-smooth with $p(0)=0,p'(x)>0$.
 In \cite{ARS20b}, the reader can find
necessary and sufficient conditions for the existence of a positive periodic solution for \eqref{ARS20b}, which is unique for $\tau$ small. In \cite{JiaZ}, a nonautonomous chemostat system with two species involving distributed infinite delays and periodic nutrient  was studied.%
%
%\begin{equation}\label{ARS20b2}
% \begin{split}
% S'(t)&=d(S_0(t)-S(t))-p(S(t))x(t)\\
%x'(t)&=-dx(t)+e^{-d\tau}p(S(t-\tau))x(t-\tau),\end{split}
%\end{equation}
%

A summary of the topics addressed in this paper is now given.  In Section 2, after recalling  the conservative principle and other preliminary results, a refinement of the criterion in \cite{Faria23} for the globally asymptotically stable of the so-called {\it washout solution} is given --  in biological terms, this means the extinction of all species.
Section 3 contains the main results of the paper on the existence of  periodic nonnegative and nontrivial solutions.  %of the form $(S_i^*(t),0,\dots, x_i^*(t),\dots, 0)$ with $S_i^*(t), x_i^*(t)$ strictly positive ($1\le i\le n$). 
Section 4 deals with two major subjects: (i) competitive exclusion, when only  the  fittest species survives in a sustained persistent way and the others are driven to extinction; (ii)
periodic coexistence, in the sense of  criteria for the existence of at least a strictly positive $\omega$-periodic solution of \eqref{Chem}. %Namely, the main result Theorem \ref{thm4.4} is established. 
Some examples illustrate the results. The last section includes some conclusions, open problems and directions for future work. 

\section{Preliminary results and extinction}
\setcounter{equation}{0}

Some notation is first introduced.  For $\tau=\max_{1\le i\le n} \tau_i$
%For  a function $f:\R^+\to \R$ bounded, we denote
%$$\ol f=\sup_{t\ge 0}f(x),\q \ul f=\inf_{t\ge 0}f(x).$$ Let $\tau=\max_{1\le i\le n} \tau_i$. 
and  $N\in\N$, consider the Banach space
$C([-\tau,0],\R^{N})$ equipped with the supremum norm $\|\var\|=\max_{\th\in [-\tau,0]}|\var(\th)|$, for a fixed norm  in $\R^{N}$.
  $\R^{N}$ is also  interpreted as the  subset of $C([-\tau,0],\R^N)$ of constant functions. A vector $v=(v_1,\dots,v_N)\in\R^N$ is positive if $v_i>0$ for all $i=1,\dots,N$; and $v\ge 0$  if $v_i\ge 0$ for all $i=1,\dots,N$. For a DDE in $C([-\tau,0],\R^N)$ of the form $x'(t)=f(t,x_t)$, $dom (f)=[0,\infty)\times  C([-\tau,0],\R^N)$,  and a set ${\cal S}\subset C([-\tau,0],\R^N)$, an  initial condition $\var$ in $\cal S$ at time $\sigma$ is written as $x_\sigma=\var$, where, as usual, $x_t \, (t\ge 0)$ denotes  the element in $C([-\tau,0],\R^N)$ defined by
$$x_t(\th)=x(t+\th),\q \th\in [-\tau,0].$$ Naturally, the phase space for  \eqref{Chem} is 
$C:=C([-\tau,0],\R^{n+1})$. Consider also the nonnegative cone $C^+=\{\var\in C: \var\ge 0\}$, where $\ge$ is the usual partial order in $C$: for $\var,\psi\in C$, $\var\ge \psi$ if $\var(\th)\ge \psi(\th)$ for all $\th\in [-\tau,0]$.  
Due to the biological interpretation of \eqref{Chem},   either $C^+$ or $C_0^+=\{ \var\in C^+:\var(0)>0\}$ will be taken as the set of initial conditions.  
%One easily sees \cite{Faria23} that solutions of \eqref{Chem} with initial conditions in  $C^+$ (respectively $C_0^+$) are defined and  nonnegative (respectively positive) on $[0,\infty)$.

For \eqref{Chem}, the following hypotheses will be assumed:
\begin{itemize}
	\item[(H1)]  $d,S_0:\R^+\to\R$ are continuous, positive  and $\omega$-periodic functions (for some $\omega>0$);
\item[(H2)] $p_i:\R^+\to\R$ are  non-decreasing, locally Lipschitzian with $p_i(0)=0, p_i(x)>0$ for $x>0,  i=1,\dots,n$.
\end{itemize}

If (H1)-(H2) hold, from the results in 
 \cite{Faria23} it follows  that the solutions with initial conditions $x_0=\var\in C^+$ are defined on $[0,\infty)$, and the sets $C^+$ and $C_0^+$ are positively invariant for \eqref{Chem}.

%Consider again the chemostat model \eqref{Chem} in $C=C([-\tau,0],\R^{n+1})$, where now 
%% \begin{itemize} \item[(H3)] 
%the functions $ d(t), S_0(t)$ are $\omega$-periodic ($\omega>0$) positive continuous functions. 
%%\end{itemize}

Denote $\ol f=\sup_{t\ge 0}f(t), \ul f=\inf_{t\ge 0}f(t)$ for $f$ bounded on  $[0,\infty)$, so that $0<\ul{d}\le d(t)\le \ol{d}<\infty, \ 0<\ul{S_0}\le S_0(t)\le \ol{S_0}<\infty$. 
 For   \eqref{Chem},  and using the terminology   in \cite{SmithW} for the case of
a  chemostat model without delays, the following   {\it ``conservation principle"}  established in \cite{Faria23} applies to the periodic model above:

\begin{lem}\label{lem2.1}  Assume (H1)-(H2). For  a nonnegative solution $(S(t),x_1(t),\dots,x_n(t))$ of \eqref{Chem},  define
 \begin{equation}\label{y}
 y(t)=S(t)+\sum_{i=1}^n e^{\int_t^{t+\tau_i} d(s)\, ds}x_i(t+\tau_i).
\end{equation}
Then $y(t)$ is a positive solution of the ODE
 \begin{equation}\label{y_ode}
y'=d(t)\big (S_0(t)-y\big),
\end{equation}
and satisfies
$$\ul{S_0}\le \liminf_{t\to \infty} y(t)\le  \limsup_{t\to \infty} y(t)\le \ol{S_0}.$$
 \end{lem}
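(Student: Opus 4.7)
The plan is to verify the ODE \eqref{y_ode} by direct differentiation of \eqref{y}, and then use the explicit variation-of-constants formula for the resulting scalar linear ODE to obtain the asymptotic bounds from $\underline{S_0}\le S_0(t)\le \overline{S_0}$.

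First I would compute $y'(t)$ term by term. Writing $E_i(t):=\int_t^{t+\tau_i} d(s)\,ds$, so that $E_i'(t)=d(t+\tau_i)-d(t)$, the product rule gives
\[
\frac{d}{dt}\bigl[e^{E_i(t)}x_i(t+\tau_i)\bigr]=e^{E_i(t)}\bigl(d(t+\tau_i)-d(t)\bigr)x_i(t+\tau_i)+e^{E_i(t)}x_i'(t+\tau_i).
\]
Evaluating the $x_i$-equation of \eqref{Chem} at $t+\tau_i$, the delayed argument becomes $t$, so that the exponential factor there is precisely $e^{-E_i(t)}$ and cancels with the $e^{E_i(t)}$ prefactor. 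The $d(t+\tau_i)$ contributions cancel as well, leaving
\[
\frac{d}{dt}\bigl[e^{E_i(t)}x_i(t+\tau_i)\bigr]=-d(t)e^{E_i(t)}x_i(t+\tau_i)+p_i(S(t))x_i(t).
\]
Summing in $i$ and adding the $S$-equation of \eqref{Chem}, the consumption terms $\sum_i p_i(S(t))x_i(t)$ cancel and one obtains $y'(t)=d(t)(S_0(t)-y(t))$, which is \eqref{y_ode}.

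Next, since $\var\in C^+$ is positively invariant by the results of \cite{Faria23}, we have $S(t),x_i(t+\tau_i)\ge 0$, hence $y(t)\ge 0$. Setting $D(t):=\int_0^t d(s)\,ds$, the variation-of-constants formula yields
\[
y(t)=y(0)e^{-D(t)}+\int_0^t d(s)S_0(s)e^{D(s)-D(t)}\,ds,
\]
so in fact $y(t)>0$ for $t>0$ (the integral is strictly positive because $d(s)S_0(s)>0$ by (H1)).

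For the asymptotic estimates, I would use $\underline{d}>0$ to conclude $D(t)\to\infty$, so the first summand vanishes. For the second, the bounds $\underline{S_0}\le S_0(s)\le \overline{S_0}$ give
\[
\underline{S_0}\bigl(1-e^{-D(t)}\bigr)\le \int_0^t d(s)S_0(s)e^{D(s)-D(t)}\,ds\le \overline{S_0}\bigl(1-e^{-D(t)}\bigr),
\]
and letting $t\to\infty$ furnishes $\underline{S_0}\le\liminf y(t)\le\limsup y(t)\le\overline{S_0}$. The only slightly delicate step is bookkeeping the cancellations in the differentiation of $y$; once these are carried out cleanly the rest is a one-line linear ODE argument.
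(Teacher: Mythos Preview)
Your proof is correct. The paper does not actually supply a proof of this lemma: it simply quotes the conservation principle as having been established in \cite{Faria23}. Your direct verification---differentiating each summand $e^{E_i(t)}x_i(t+\tau_i)$, using the $x_i$-equation evaluated at $t+\tau_i$ so that the delayed argument becomes $t$ and the exponential factors cancel, and then observing that the consumption terms $\sum_i p_i(S(t))x_i(t)$ cancel against those in the $S$-equation---is exactly the standard computation behind this kind of identity, and is presumably what is carried out in \cite{Faria23}. The variation-of-constants argument for the asymptotic bounds is likewise the natural route and is correct as written.
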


 \begin{rmk}\label{rmk2.0}  {\rm A  {\it ``conservation principle"}   holds for more general nonautonomous chemostat models, %of the form
 \begin{equation}\label{Chem_b}
 \begin{split}
 S'(t)&=d(t)(S_0(t)-S(t))-\sum_{i=1}^nb_i(t)p_i(S(t))x_i(t)\\
x_i'(t)&=-d(t)x_i(t)+e^{-\int_{t-\tau_i}^td(s)\, \dd s}b_i(t-\tau_i)p_i(S(t-\tau_i))x_i(t-\tau_i),\q i=1,\dots,n,\\
\end{split}
\end{equation}
for which Lemma \ref{lem2.1} is still valid with $y(t)$ given by the same formula \eqref{y}.}
\end{rmk}

% This auxiliary result is very useful,  
% as the nonhomogeneuous linear scalar ODE \eqref{y_ode} allows   the reduction of \eqref{Chem} to  an $n$-dimensional system. 
%\begin{proof} For $y(t)$ in \eqref{y} , The derivative  along solutions of \eqref{Chem} leads to  \eqref{y_ode}.
%\end{proof}

From the conservation principle, it follows  that \eqref{Chem} is dissipative, with $ \limsup_{t\to \infty} S(t)\le \ol{S_0},  \limsup_{t\to \infty} x_i(t)\le e^{-\ul{d}\tau_i} \, \ol{S_0}$ for $i=1,\dots,n.$
In addition,  from \cite{Faria23}, there is $m_0>0$ such that the coordinate $S(t)$  of all nonnegative solutions satisfies $\liminf_{t\to\infty} S(t)\ge m_0$.
%\begin{equation}\label{S^-}S^-:=\liminf_{t\to\infty} S(t)\ge m_0.\end{equation}
For specific functions $p_i(x)$, an explicit estimative  for $m_0$ can be provided. Moreover, if condition 
 \begin{equation}\label{pers}p_i(m_0)>\max_{t\in [0,\omega]} \Big (d(t)e^{\int_{t-\tau_i}^td(s)\, ds}\Big),\q i=1,\dots,n,\end{equation} is satisfied, it was also established in \cite{Faria23} that \eqref{Chem} is uniformly persistent (in $C_0^+$).

Observe that there is a unique positive $\omega$-periodic  solution $y^*(t)$ of \eqref{y_ode}, given by
\begin{equation}\label{y_per}y^*(t)=y^*(0) e^{-\int_0^t d(s)\, ds}+\int_0^td(s)e^{-\int_s^t d(r)\, dr}S_0(s)\, ds\end{equation} and 
\begin{equation}\label{y_per2}y^*(0)=\big(e^{\int_0^\omega d(r)\, dr}-1\big)^{-1}\int_0^\omega d(s)e^{\int_0^sd(r)\, dr}S_0(s)\, ds.\end{equation}
Moreover, all the other solutions $y(t)$ satisfy $y(t)-y^*(t)\to 0$ as $t\to \infty$.
The periodic function $ (y^*(t),0,\dots,0)$ is always a solution of  \eqref{Chem}, which in this context is called the periodic  ``washout solution" or the ``trivial periodic  solution". For \eqref{Chem0}, the washout solution is the washout equilibrium $ (S_0,0,\dots,0)$.

%\begin{equation}\label{Chem_y}
%x_i'(t)=-d(t)x_i(t)+f_i(t,x_t),\q i=1,\dots,n, 
%\end{equation}
%for
%\begin{equation}\label{f_i} f_i(t,x_t)=e^{-\int_{t-\tau_i}^t d(r)\, \dd r}p_i\Big(y(t-\tau_i)-\sum_{j=1}^n e^{\int_{t-\tau_i}^{t+\tau_j-\tau_i} d(s)\, \dd s}x_j(t+\tau_j-\tau_i)\Big)x_i(t-\tau_i),\end{equation} 
%where now $x_t\in \mathfrak{C}=C([-\tau,\tau^0]; \R^n)$ with $ \tau=\max_{1\le i\le n}\tau_i$, $\tau^0=\max_{1\le i,j\le n} (\tau_i-\tau_j)$, $x_t(\th):=x(t+\th)$ for $\th\in[-\tau,\tau^0]$. 
%
%
%

As the first equation in \eqref{Chem} has the form $S'(t)=-d(t)S(t)+g_0(t,x_t)$ with $g_0(t,\var)$ not necessarily positive, when looking for periodic solutions of \eqref{Chem} the framework in \cite{FF} and several other papers is not directly applicable.  A way to tackle this problem is to use the conservation principle and  the nonhomogeneuous linear ODE  \eqref{y_ode} as an auxiliary equation, and thus reduce \eqref{Chem} to  an $n$-dimensional system.  This reduction is performed below.

If  $(S(t),x_1(t),\dots,x_n(t))$ is a periodic solution of \eqref{Chem}, then  for $y(t)$ in \eqref{y} we have $y(t)=y^*(t)$ given by \eqref{y_per}-\eqref{y_per2}. We therefore eliminate the first equation in \eqref{Chem} and replace $S(t)$ by\begin{equation}\label{S(t)} S(t)=y^*(t)-\sum_{i=1}^n e^{\int_t^{t+\tau_i} d(s)\, ds}x_i(t+\tau_i),\end{equation}  in the last $n$ equations. Instead of looking for nonnegative  periodic solutions of \eqref{Chem}, equivalently  the aim is to find nonnegative  periodic solutions of the following  functional differential system of {\it mixed-type} (delayed and advanced-type):
  \begin{equation}\label{Chem_y}
x_i'(t)=-d(t)x_i(t)+f_i(t,x_t),\q ,\ i=1,\dots,n,
\end{equation}
where now $x_t\in \mathfrak{C}=C([-\tau,\tau^0]; \R^n)$ with $ \tau=\max_{1\le i\le n}\tau_i$, $\tau^0=\max_{1\le i,j\le n} (\tau_i-\tau_j)$, $x_t(\th):=x(t+\th)$ for $\th\in[-\tau,\tau^0]$ and
  \begin{equation}\label{f_i}f_i(t,x_t)=e^{-\int_{t-\tau_i}^td(s)\, ds}p_i\bigg(y^*(t-\tau_i)-\sum_{j=1}^n e^{\int_{t-\tau_i}^{t+\tau_j-\tau_i} d(s)\, ds}x_j(t+\tau_j-\tau_i)\bigg)x_i(t-\tau_i).
\end{equation}
Existence, uniqueness, positiveness  and boundedness on $[0,\infty)$ of solutions for \eqref{Chem_y} is assured by the original setting for system \eqref{Chem}.
From \eqref{y}, we have $y^*(t-\tau_i)-\sum_{j=1}^n e^{\int_{t-\tau_i}^{t+\tau_j-\tau_i} d(s)\, ds}x_j(t+\tau_j-\tau_i)\ge 0$ for all $i$ and $t\ge 0$.
If  all the delays are equal, $\tau_i=\tau$ for all $i$, then \eqref{Chem_y} is in fact a delayed  differential system; otherwise, system \eqref{Chem_y}  is of  mixed-type, since it has both delayed and advanced arguments.  %With the obvious meaning, 
Henceforth, $\mathfrak{C}$ is endowed with the supremum norm, $\mathfrak{C}^+$ is the cone  of nonnegative elements of  $\mathfrak{C}$, and  $\le $ is the partial order in $\mathfrak{C}$ defined by $\phi\le \psi$ if $\psi-\phi\in \mathfrak{C}^+$.

%--
%
%System \eqref{Chem} is equivalent to the  system of  {\it mixed} (delayed-advanced) functional differential equations
%\begin{equation}\label{mixed}
%\begin{split}
%y'(t)&=d(t)\big (S_0(t)-y(t)\big)\\
%x_i'(t)&=-d(t)x_i(t)+\tilde f_i(t,x_t),\qq i=1,\dots,n,
%\end{split}
%\end{equation}
%where now $x_t\in \mathfrak{C}=C([-\tau,\tau^0]; \R^n)$ with $ \tau=\max_{1\le i\le n}\tau_i$, $\tau^0=\max_{1\le i,j\le n} (\tau_i-\tau_j)$, $x_t(\th):=x(t+\th)$ for $\th\in[-\tau,\tau^0]$, and
%\begin{equation*}\label{f_i} \tilde f_i(t,x_t)=e^{-\int_{t-\tau_i}^t d(r)\, dr}p_i\Big(y(t-\tau_i)-\sum_{j=1}^n e^{\int_{t-\tau_i}^{t+\tau_j-\tau_i} d(s)\, ds}x_j(t+\tau_j-\tau_i)\Big)x_i(t-\tau_i).\end{equation*} 
%
%

 From \cite{Faria23}, if
\begin{equation}\label{exti}p_i( y^0(t))<d(t+\tau_i)e^{\int^{t+\tau_i}_t d(r)\, dr},\q t\in [0,\omega],
\end{equation}
 for some $i\in\{ 1,\dots,n\}$,
where $y^0(t)$ is any positive solution of \eqref{y_ode},
then all nonnegative solutions $(S(t),x_1(t),\dots, x_n(t))$ of  \eqref{Chem}  satisfy $\lim_{t\to\infty}x_i(t)=0$.
In particular, if \eqref{exti} is satisfied for all $i=1,\dots,n$,
the washout solution $(y^*(t),0,\dots,0)$ 
 is a global attractor of all nonnegative solutions. In the case of periodic systems  \eqref{Chem}, this sufficient condition for extinction of all species  in the chemostat can be improved, as analysed below.

Firstly,  we remark that  Nah and R\"ost  \cite{NahRost} gave  a threshold criterion for stability of scalar linear $\omega$-periodic DDEs  of the form 
$x'(t)=-a(t)x(t)+b(t)x(t-\tau)$ which can be applied to the present situation. After normalising the delay, without loss of generality one may suppose that $\tau=1< \omega$ as in the statement below.

\begin{lem}\label{lem2.1} \cite{NahRost} Consider 
\begin{equation}\label{scalarRost}x'(t)=-a(t)x(t)+b(t)x(t-1),\end{equation} 
where $a(t),b(t)$ are continuous, nonnegative and $\omega$-periodic functions, for some $\omega>1$, 
 and define  $r=:\int_0^\omega (b(t)-a(t))\, dt$. If  $b(t+1)-a(t)$ does not change sign,  then: (i) if $r<0$, \eqref{scalarRost}
  is asymptotically stable; (ii) if $ r=0$, \eqref{scalarRost} is stable; (iii) if $  r>0$, \eqref{scalarRost} is unstable; in the latter case, solutions $x(t)$ of \eqref{scalarRost} with initial conditions $x_0=\var>0$ satisfy $\lim_{t\to\infty} x(t)=\infty$.
  \end{lem}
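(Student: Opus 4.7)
The plan is to reduce the stability analysis of \eqref{scalarRost} to the monotonicity of a scalar functional along solutions. Set
$$y(t) := x(t) + \int_{t-1}^t b(s+1)\, x(s)\, ds.$$
A direct differentiation using \eqref{scalarRost} yields
$$y'(t) = x'(t) + b(t+1) x(t) - b(t) x(t-1) = \bigl(b(t+1) - a(t)\bigr)\, x(t).$$
By $\omega$-periodicity of $b$, $\int_0^\omega b(t+1)\, dt = \int_0^\omega b(t)\, dt$, hence $\int_0^\omega (b(t+1) - a(t))\, dt = r$. Under the sign-constancy hypothesis on $b(t+1) - a(t)$, the sign of $r$ thus pins down the pointwise sign of $b(t+1) - a(t)$ on $[0,\omega]$: non-positive, identically zero, or non-negative according to whether $r < 0$, $r = 0$, or $r > 0$, with strict inequality on a set of positive measure in the first and last cases.

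Solutions starting from $\var \geq 0$ with $\var(0) > 0$ stay nonnegative and, for $t \geq 1$, strictly positive, so $y(t) > 0$ for $t \geq 1$. In case (ii), $y'(t) \equiv 0$ forces $y(t) \equiv y(0)$; since $0 \leq x(t) \leq y(t)$ and $y(0)$ depends linearly and continuously on $\var$, the zero solution is Lyapunov stable. In case (i), $y$ is nonincreasing and bounded below by $0$, so it converges to some $y^\infty \geq 0$; the identity
$$\int_0^\infty \bigl(a(t) - b(t+1)\bigr)\, x(t)\, dt = y(0) - y^\infty < \infty,$$
together with the $\omega$-periodicity of $a - b(\cdot + 1)$ and its positive average $-r/\omega$, forces $\liminf_{t \to \infty} x(t) = 0$; a standard fluctuation argument on \eqref{scalarRost} (using that $x'$ remains bounded on bounded trajectories) then upgrades this to $\lim_{t \to \infty} x(t) = 0$, and asymptotic stability follows by linearity.

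Case (iii) is where I expect the main obstacle. Here $y$ is nondecreasing with $y(t) \geq y(0) > 0$, but to conclude $\lim_{t \to \infty} x(t) = \infty$ one needs $x$ to stay bounded away from zero so that $\int_0^\infty (b(t+1) - a(t))\, x(t)\, dt = \infty$. I would argue by contradiction: if $x$ were bounded, then $y$ would be bounded, yielding $\int_0^\infty (b(t+1) - a(t))\, x(t)\, dt < \infty$; combined with the strictly positive average of $b(t+1) - a(t)$ over a period, this forces $\liminf x(t) = 0$. Ruling out this possibility amounts to showing that $r > 0$ implies the dominant Floquet multiplier of the period map for \eqref{scalarRost} is strictly greater than $1$; this can be derived from a Krein-Rutman argument applied to the period map on the invariant positive cone, together with a monotone-iteration bound on $y$, and is the main technical point handled in \cite{NahRost}.
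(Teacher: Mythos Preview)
The paper does not give a proof of this lemma; it is quoted directly from \cite{NahRost} and used as a black box in Theorem~\ref{thm2.1}. So there is no ``paper proof'' to compare against.

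On the merits of your attempt: the functional $y(t)=x(t)+\int_{t-1}^t b(s+1)x(s)\,ds$ is the natural Lyapunov-type quantity, the identity $y'(t)=(b(t+1)-a(t))x(t)$ is correct, and your reading of the sign hypothesis is right. Case (ii) is fine.

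In case (i), however, the passage from $\liminf_{t\to\infty}x(t)=0$ to $\lim_{t\to\infty}x(t)=0$ is a genuine gap, and the ``standard fluctuation argument'' you invoke does not close it. Along a sequence $t_k$ with $x(t_k)\to L:=\limsup x>0$ and $x'(t_k)\to 0$, the equation gives (after extracting a subsequence) $\hat a\,L=\hat b\,\hat x$ with $\hat x\le L$, hence only $a(\hat t)\le b(\hat t)$ at some limit point $\hat t$. This involves $b(\hat t)$, not $b(\hat t+1)$, and does not contradict $b(t+1)\le a(t)$. Moreover, when $a(t)-b(t+1)$ vanishes on a subinterval of $[0,\omega]$, the finiteness of $\int_0^\infty (a(t)-b(t+1))x(t)\,dt$ gives no information about $x$ on the periodic translates of that interval, and boundedness of $x'$ alone cannot propagate decay across those gaps. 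In case (iii) you explicitly defer to \cite{NahRost} for the Krein--Rutman step, so that case is not proved either. In short, the monotone functional is the right starting point, but the proposal leaves the substantive parts of (i) and (iii) unresolved.
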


Together with comparison results for solutions of quasi-monotone DDEs, this lemma provides a  criterion for extinction of all microorganisms as follows.

 \begin{thm}\label{thm2.1} For \eqref{Chem}, assume that (H1)-(H2) hold.
 If 
 \begin{equation}\label{exti_peri1}p_i( y^*(t))\le d(t)e^{\int^{t+\tau_i}_t d(r)\, dr},\q t\in [0,\omega], i=1,\dots,n,\end{equation}
then the trivial solution $(y^*(t),0,\dots,0)$ of \eqref{Chem} is stable. If in addition 
 \begin{equation}\label{exti_peri2}\min_{t\in[0,\omega]}\Big(p_i( y^*(t))-d(t)e^{\int^{t+\tau_i}_t d(r)\, dr}\Big)<0, \q  i=1,\dots,n,\end{equation}
 then  $(y^*(t),0,\dots,0)$  is globally asymptotically stable (GAS).
% (ii)  ?? If there exists $i\in \{1,\dots,n\}$ such that
%  $$p_i( y^*(t))> d(t)e^{\int^{t+\tau_i}_t d(r)\, dr},\q t\in [0,\omega],$$
%  then the trivial periodic solution $(p^*(t),0,\dots,0)$ is unstable.
 \end{thm}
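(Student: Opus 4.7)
The approach is to use the conservation principle together with a comparison argument that dominates each coordinate $x_i(t)$ of the solution by a solution of a scalar linear periodic DDE, and then to apply the Nah--R\"ost criterion stated just before the theorem. The whole machinery hinges on reducing the (vector, nonlinear) problem for $(x_1,\dots,x_n)$ to $n$ independent scalar linear problems in the limit $t\to\infty$.

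First, by the conservation principle, $S(t)\le y(t)$ and $y(t)\to y^*(t)$ exponentially as $t\to\infty$. Since each $p_i$ is non-decreasing, the $i$-th equation in \eqref{Chem} yields
$$x_i'(t)\le -d(t)x_i(t)+e^{-\int_{t-\tau_i}^t d(s)\, ds}p_i\bigl(y(t-\tau_i)\bigr)x_i(t-\tau_i).$$
Since the right-hand side is monotone in the delayed argument, comparison for quasi-monotone DDEs \cite{Smith} bounds $x_i(t)$ from above by a solution of a scalar linear DDE whose coefficients are asymptotically equal to those of the limiting periodic equation
$$u_i'(t)=-d(t)u_i(t)+a_i(t)u_i(t-\tau_i),\qquad a_i(t):=e^{-\int_{t-\tau_i}^t d(s)\, ds}p_i\bigl(y^*(t-\tau_i)\bigr).$$
After rescaling time so that $\tau_i=1<\omega$, condition \eqref{exti_peri1} is exactly $a_i(t+\tau_i)-d(t)\le 0$, which is the constant-sign hypothesis of the Nah--R\"ost lemma. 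By $\omega$-periodicity,
$$r_i:=\int_0^\omega\bigl(a_i(t)-d(t)\bigr)\, dt=\int_0^\omega\bigl(a_i(t+\tau_i)-d(t)\bigr)\, dt\le 0,$$
with strict inequality whenever \eqref{exti_peri2} holds. Hence the Nah--R\"ost lemma yields stability of the limiting linear DDE under \eqref{exti_peri1} alone, and asymptotic stability under the additional condition \eqref{exti_peri2}.

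To transfer back to the nonlinear system \eqref{Chem}: under \eqref{exti_peri2} the strict slack $r_i<0$ persists under a small perturbation, so I would replace $p_i(y^*(t-\tau_i))$ by $p_i(y^*(t-\tau_i)+\varepsilon)$ with $\varepsilon>0$ sufficiently small and still obtain $r_i(\varepsilon)<0$; since $y(t-\tau_i)<y^*(t-\tau_i)+\varepsilon$ for $t$ large, the quasi-monotone comparison eventually applies and forces $x_i(t)\to 0$ for every $i$. The identity \eqref{y} then gives $S(t)-y^*(t)=\bigl(y(t)-y^*(t)\bigr)-\sum_j e^{\int_t^{t+\tau_j}d(s)\, ds}x_j(t+\tau_j)\to 0$, establishing GAS. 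For the mere stability claim under only \eqref{exti_peri1}, initial data close to $(y^*,0,\dots,0)$ keep $y(t)$ inside a tube of $y^*(t)$ for all time by the stability of \eqref{y_ode}, so the coefficients of the effective linear DDE stay uniformly close to $a_i(t)$, and the stability of the comparison equation (Nah--R\"ost with $r_i=0$) carries over through continuous dependence on initial data.

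The main obstacle is precisely this perturbation step: because the effective coefficient of the $x_i$-equation depends on $S(t-\tau_i)$ (and hence on the solution itself), it is only asymptotically $\omega$-periodic, while the Nah--R\"ost criterion is stated for exactly periodic equations. The strict slack furnished by \eqref{exti_peri2} is what allows the Nah--R\"ost asymptotic-stability conclusion to survive the perturbation; without such slack the result collapses from attractivity to plain stability, which is consistent with the dichotomy in the statement.
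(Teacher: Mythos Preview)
Your strategy---bound each $x_i$ above by a solution of a scalar linear periodic DDE and invoke the Nah--R\"ost lemma---is precisely the paper's. The divergence is in how you reach the periodic comparison equation. The paper works on the reduced system \eqref{Chem_y}, where $S(t-\tau_i)$ has already been replaced by $y^*(t-\tau_i)-\sum_j e^{\int_{t-\tau_i}^{t+\tau_j-\tau_i}d(s)\,ds}\,x_j(t+\tau_j-\tau_i)\le y^*(t-\tau_i)$ for every nonnegative $x$. This makes the inequality $g_i\le h_i$ in \eqref{linear_comp} hold with the \emph{exactly} $\omega$-periodic coefficient $p_i(y^*(t-\tau_i))$, so Nah--R\"ost applies to \eqref{lineari_per} with no perturbation at all. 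You instead keep the cruder bound $S\le y$ from the conservation principle and are then forced to replace $y^*$ by $y^*+\varepsilon$ to dominate the actual (non-periodic) $y$.

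That perturbation step is where your argument has a genuine gap. You correctly observe that the slack $r_i<0$ survives for small $\varepsilon$, but you overlook the \emph{other} hypothesis of Lemma~\ref{lem2.1}: the function $b(t+\tau_i)-a(t)=e^{-\int_t^{t+\tau_i}d(r)\,dr}\,p_i(y^*(t)+\varepsilon)-d(t)$ must not change sign. Assumption \eqref{exti_peri1} gives this $\le 0$ only at $\varepsilon=0$; condition \eqref{exti_peri2} says merely that the \emph{minimum} is negative, so equality in \eqref{exti_peri1} may still hold on part of $[0,\omega]$. For any $\varepsilon>0$ (and $p_i$ strictly increasing there) the perturbed expression then becomes positive on that part while staying negative near the minimiser, so the sign hypothesis fails---and, as the paper itself emphasises in Remark~\ref{rmk2.3}, without that hypothesis $r_i<0$ is \emph{not} a stability threshold. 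The same objection hits your stability argument under \eqref{exti_peri1} alone: you are perturbing the equation, not just the initial data, so ``continuous dependence on initial data'' does not carry the Nah--R\"ost conclusion across. The paper's route through \eqref{Chem_y} sidesteps the whole issue because the comparison coefficient is $p_i(y^*(\cdot))$ on the nose and the lemma applies directly.
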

 
 \begin{proof} W.l.g. let $\omega >\tau_i$ for $i=1,\dots,n$. Write \eqref{Chem_y} as $x_i'(t)=g_i(t,x_t),\,  i=1,\dots,n$, where $g_i(t,x_t)=-d(t)x_i(t)+f_i(t,x_t)$. It is  clear that 
 \begin{equation}\label{linear_comp}
 \begin{split}
 g_i(t,\var)&\le -d(t)\var_i(0)+e^{-\int_{t-\tau_i}^td(s)\, ds}p_i(y^*(t-\tau_i))\var_i(-\tau_i)\\
 &=: h_i(t,\var),\q i=1,\dots,n,
 \end{split} \end{equation}
 for $t\ge 0$ and $\var=(\var_1,\dots,\var_n)\in  \mathfrak{C}^+$. For each $i\in\{1,\dots,n\}$, consider the linear periodic scalar DDE
\begin{equation}\label{lineari_per}
u_i'(t)=-d(t)u_i(t)+e^{-\int_{t-\tau_i}^td(s)\, ds}p_i(y^*(t-\tau_i))u_i(t-\tau_i)\end{equation}
Equations \eqref{lineari_per} satisfy the Smith's quasi-monotone condition (Q)  \cite[p.~78]{Smith}: for $t\ge 0, \var,\psi\in  \mathfrak{C}^+$, if $\var \le \psi$ and $\var_i(0)=\psi_i(0)$ holds for some $i$, then
$h_i(t,\var)\le h_i(t,\psi)$.
By a comparison result in \cite[Theorem 5.1.1]{Smith}, the solution of  \eqref{Chem_y} with an initial condition  $\var\in \mathfrak{C}^+$ at $t=0$ is bounded from above by the solution $(u_1(t),\dots,u_n(t))$ of system \eqref{lineari_per} ($1\le i\le n$) with the same initial condition. 
 
 Next, define $$r_i:=\int_0^\omega \Big (e^{-\int_{t-\tau_i}^td(s)\, ds}p_i( y^*(t-\tau_i))-d(t) \Big)\, dt,\q i=1,\dots,n.$$
Since $d(t)$ and $e^{-\int_{t-\tau_i}^td(s)\, ds}p_i(y^*(t-\tau_i))$ are $\omega$-periodic, $r_i$ is still given by
$$r_i=\int_0^\omega \Big (e^{-\int_t^{t+\tau_i}d(s)\, ds}p_i( y^*(t))-d(t) \Big)\, dt.$$
For each $i$, from hypothesis \eqref{exti_peri1} we derive that  the function under the integral does not change sign and $r_i\le 0$.
Lemma \ref{lem2.1} implies that \eqref{lineari_per}
is stable. Moreover, if \eqref{exti_peri1} and
 \eqref{exti_peri2} hold, then  $r_i<0$ and  \eqref{lineari_per} is asymptotically stable; since, the DDE is linear, it is also GAS. 
 %Moreover, if $p_i( y^*(t))\ge d(t)e^{\int^{t+\tau_i}_t d(r)\, dr},\ t\in [0,\omega]$ and $r_i>0$, the %equation \eqref{lineari_per} is unstable.
By the above comparison of solutions of \eqref{Chem_y} and \eqref{lineari_per} ($1\le i\le n$), the  result follows. \end{proof}

\begin{rmk}\label{rmk2.2} {\rm The comparison principle  \cite[Theorem 5.1.1]{Smith} gives a comparison of solutions for two different DDEs $x'(t)=f_1(t,x_t), x'(t)=f_2(t,x_t)$ if $f_1(t,\var)\le f_2(t,\var)$ on a set $\Omega\subset dom\, f_1=dom\, f_2$ and either $f_1$ or $f_2$ satisfies the quasi-monotone condition (Q). Note nevertheless that, once the existence and continuous dependence of solutions on initial data are established, the proof of  \cite[Theorem 5.1.1]{Smith}  applies to {\it mixed-type} FDEs as above.  
}\end{rmk}
\begin{rmk}\label{rmk2.3} {\rm For  \eqref{scalarRost} with $a(t),b(t)$ $\omega$-periodic continuous functions ($\omega>1$),  if $b(t+1)-a(t)$ changes its sign,   examples were presented in  \cite{NahRost} showing  that, in general,  $r=:\int_0^\omega (b(t)-a(t))\, dt=0$ is not a stability threshold.  
On the other hand, if $p_i$ are $C^1$-smooth functions, system \eqref{lineari_per} ($1\le i\le n$) is the linearisation of \eqref{Chem_y} at 0. Even for periodic ODE systems with the trivial solution 0, it is known that the stability of its linearisation at 0 does not imply, in general, the stability of 0 as a solution of the original system, much less its global attractivity (see e.g. \cite[p.~87]{Bellman}). Nevertheless, 
in virtue of the inequalities \eqref{linear_comp}, %and comparison result for functional differential equations satisfying (Q)
an interesting open problem is whether comparison result for FDEssatisfying (Q)
 can lead to the global attractivity of the trivial washout solution $(y^*(t),0,\dots,0)$ of \eqref{Chem} assuming simply the inequalities \eqref{exti_peri1}. In fact, this is true for the associated ODE chemostat model \eqref{Chem}, where all the delays $\tau_i$ are zero, if $p_i$ are strictly increasing: it was established in \cite{WZ} that $\int_0^\omega (p_i( y^*(t))-d(t))\, dt\le 0\ (1\le i\le n)$ are sufficient conditions for the extinction of all microorganisms.}\end{rmk}

\section{Existence of nontrivial periodic solutions}
\setcounter{equation}{0}

The existence of nontrivial $\omega$-periodic solutions of \eqref{Chem} is now addressed. Beforehand, further notation is introduced and an auxiliary result given.

For $n\in\N$ fixed, denote by $C_\omega:=C_\omega(\R,\R^n)$ the space of  continuous and $\omega$-periodic functions $\var:\R\to\R^n$ with the supremum norm, and by  $C_\omega^+:=C_\omega^+(\R,\R^n)$ the cone of its nonnegative elements.
  %, and the order in  $PC_\omega(\R,\R^n)$ induced by $PC_\omega^+(\R,\R^n)$.   
 For $n=1$, $C_\omega(\R)$ and $C_\omega^+(\R)$ are used with the obvious meaning. 
  As before, $\R^n$ is also  seen as the set of constant functions, thus $\R^n\subset C_\omega$.
%  The space $C_\omega$ can be naturally identified as a subset of both $C([-\tau,0];\R^n)$ and $\mathfrak{C}=C([-\tau,\tau^0];\R^n)$ by the injection $x\mapsto x_0$ (since, without loss of generality, one may consider $\tau\ge \omega$). 
 
 Fix the maximum norm in $\R^n$. For $x(t)=(x_1(t),\dots,x_n(t))\in C_\omega^+$, we have
$\dps\|x\|=\max_{1\le i\le n}\max_{t\in[0,\omega]}x_i(t)$, and define
$$\ul{x}=\min_{1\le i\le n}\min_{t\in[0,\omega]}x_i(t).$$
For a suitable  $\sigma\in(0,1)$,   consider a new cone $K(\sigma)$ in $C_\omega^+$ given by
\begin{equation}\label{K}
K({\sigma}):=\{x\in C_\omega^+:x_i(t)\geq\sigma \|x_i\|, t\in [0,\omega], i=1,\dots,n\}.
\end{equation}
If  $\sigma\in(0,1)$ is fixed, we simply write  $K$ instead of $K({\sigma})$.

Since $d(t),y^*(t)$ are $\omega$-periodic functions,  for $x\in C_\omega^+$ it is clear that the functions
$$t\mapsto \int_{t}^{t+\omega}e^{\int _t^{s-\tau_i} d(r)\, dr}p_i^*(s,x_s)\, ds,$$
where 
\begin{equation}\label{pi*}p_i^*(t,x_t)=p_i\bigg(y^*(t-\tau_i)-\sum_{j=1}^n e^{\int_{t-\tau_i}^{t+\tau_j-\tau_i} d(s)\, ds}x_j(t+\tau_j-\tau_i)\bigg),\end{equation}
are $\omega$-periodic and nonnegative as well $(1\le i\le n)$. Consider the extension of each  function $p_i(x)$ given by $\tilde p_i(x)=\begin{cases} p_i(x)&{\rm if} \ x\ge 0\\ 0&{\rm if}\  x<0\end{cases}.$  In what follows,  as in the definition of $\Phi$ below, for simplicity  we drop the tilde in $\tilde p_i(x)$ and  write $p_i(x)$ instead of $\tilde p_i(x)$.  

Define the operator $\Phi:C_\omega^+\to C_\omega^+$  by
\begin{equation}\label{Phi}
\begin{split}
 \Phi &=(\Phi_1,\dots, \Phi_n),\\
(\Phi_i x)(t)&=\big(e^{D(\omega)}-1\big)^{-1}\int_t^{t+\omega}e^{\int_t^sd(r)\, dr}f_i(s,x_{s})\, ds,\, \, t\in\R,
\end{split}
\end{equation}
where  $D(\omega):=\int_0^\omega d(s)\, ds$  and $f_i$ as in \eqref{f_i} are rewritten as
\begin{equation}\label{f_Chem_y}
\begin{split}
% f_0(t,\var) &=f_0(t)=d(t)S_0(t),\\
f_i(t,x_t)&=e^{-\int_{t-\tau_i}^td(s)\, ds}p_i^*(t,x_t)x_i(t-\tau_i),\ i=1,\dots,n,\end{split}
\end{equation}
for $p_i^*(t,x_t)$  in \eqref{pi*} and $x(t)=(x_1(t),\dots,x_n(t))$.

\begin{lem}\label{lem3.1} With the above notations, take $\sigma\le e^{-D(\omega)}$  and $K=K(\sigma)$. Then:\vskip 0mm
(i) $x$ is a {fixed point} of   $\Phi$ if and only if $x$ is a nonnegative $\omega$-periodic solution of \eqref{Chem_y}; \vskip 0mm
 (ii) $\Phi(K)\subset K$;\vskip 0mm
 (iii) $\Phi$ is continuous and  compact on $K$. \end{lem}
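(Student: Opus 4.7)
The plan is to handle the three parts in order, with (i) supplying the motivating computation that drives (ii) and (iii).

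For (i), I will apply the integrating factor $\mu(t)=e^{\int_0^t d(r)\, dr}$ to \eqref{Chem_y}, turning it into $(\mu x_i)'(t)=\mu(t)f_i(t,x_t)$, and then integrate from $t$ to $t+\omega$. Using the $\omega$-periodicity of $d$, one has $\mu(t+\omega)=\mu(t)e^{D(\omega)}$; imposing $x_i(t+\omega)=x_i(t)$ gives exactly the formula defining $\Phi_i$ after dividing through by $\mu(t)(e^{D(\omega)}-1)$. Conversely, differentiating $(\Phi_ix)(t)$ via Leibniz's rule on the integral with variable endpoints yields
\[
(\Phi_ix)'(t)=\bigl(e^{D(\omega)}-1\bigr)^{-1}\Bigl[e^{D(\omega)}f_i(t+\omega,x_{t+\omega})-f_i(t,x_t)\Bigr]-d(t)(\Phi_ix)(t),
\]
and the $\omega$-periodicity of $f_i(\cdot,x_\cdot)$ (which follows from the $\omega$-periodicity of $y^*$, $d$ and of $x\in C_\omega$) collapses the bracket to $(e^{D(\omega)}-1)f_i(t,x_t)$, producing $(\Phi_ix)'(t)=-d(t)(\Phi_ix)(t)+f_i(t,x_t)$. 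Since $\Phi_ix$ is automatically $\omega$-periodic by a change of variables $s\mapsto s+\omega$, this gives the converse direction.

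For (ii), I will bound the kernel: for $s\in[t,t+\omega]$ we have $0\le \int_t^s d(r)\,dr\le D(\omega)$, so $1\le e^{\int_t^sd(r)\,dr}\le e^{D(\omega)}$. Using the $\omega$-periodicity of $f_i(\cdot,x_\cdot)$ to rewrite $\int_t^{t+\omega}$ as $\int_0^\omega$, this yields
\[
\bigl(e^{D(\omega)}-1\bigr)^{-1}\int_0^\omega f_i(s,x_s)\,ds \le (\Phi_ix)(t)\le \bigl(e^{D(\omega)}-1\bigr)^{-1}e^{D(\omega)}\int_0^\omega f_i(s,x_s)\,ds.
\]
The right-hand inequality bounds $\|\Phi_ix\|$, and combining gives $(\Phi_ix)(t)\ge e^{-D(\omega)}\|\Phi_ix\|\ge\sigma\|\Phi_ix\|$ for every $t$, which is precisely the condition defining $K=K(\sigma)$. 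Nonnegativity is built in: $f_i\ge 0$ because $p_i$ has been extended by $0$ on the negatives.

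For (iii), continuity of $\Phi$ follows from continuity of $p_i$ and the fact that $x\mapsto f_i(\cdot,x_\cdot)$ is continuous from $C_\omega$ to $C_\omega(\R)$, then from continuity of the integral operator. For compactness, I will apply Arzelà--Ascoli. Uniform boundedness on bounded subsets of $K$ follows from the upper bound above together with the fact that $f_i$ is bounded when $x$ is bounded (since the argument of $p_i^*$ stays in a compact set and $p_i$ is locally Lipschitz, hence bounded on compacta). Equicontinuity comes from the identity derived in (i), namely $(\Phi_ix)'(t)=-d(t)(\Phi_ix)(t)+f_i(t,x_t)$, which shows $\|(\Phi_ix)'\|_\infty$ is uniformly bounded on bounded subsets of $K$, giving a uniform Lipschitz constant.

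The only delicate point is the Leibniz-rule differentiation in (i): one must keep track of the two contributions from the moving endpoints and from the $t$-dependence inside $e^{\int_t^sd(r)\,dr}$, and then use $\omega$-periodicity of $f_i(\cdot,x_\cdot)$ to cancel. Once that identity is in hand, parts (ii) and (iii) are straightforward kernel estimates plus a standard Arzelà--Ascoli argument.
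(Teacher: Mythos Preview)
Your proposal is correct and follows essentially the same route as the paper: the same integrating-factor/Leibniz computation for (i), the same kernel bound $1\le e^{\int_t^s d}\le e^{D(\omega)}$ for (ii), and Arzel\`a--Ascoli for (iii). The only minor variation is that for equicontinuity in (iii) you invoke the derivative identity $(\Phi_ix)'(t)=-d(t)(\Phi_ix)(t)+f_i(t,x_t)$ to get a uniform Lipschitz bound, whereas the paper appeals more abstractly to uniform equicontinuity of $F(t,x)=f(t,x_t)$; your argument is the more explicit of the two but both are standard.
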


\begin{proof} (i)  We have $\frac{d}{dt} (\Phi_i x)(t)=-d(t)(\Phi_i x)(t)+f_i(t,x_t)\, (1\le i\le n)$, thus a fixed point of $\Phi$ is an $\omega$-periodic solution of \eqref{Chem_y}. Conversely, if $x\in C_\omega^+$ is a solution of \eqref{Chem_y}, then 
\begin{equation*}
\begin{split}(\Phi_i x)(t)&=\big(e^{D(\omega)}-1\big)^{-1}\int_t^{t+\omega}\!\! e^{\int_t^sd(r)\, dr}(x_i'(s)+d(s)x_i(s))\, ds\\
&= \big(e^{D(\omega)}-1\big)^{-1}\Big[x_i(s)e^{\int_t^sd(r)\, dr}\Big]_t^{t+\omega}=x_i(t).\end{split}
\end{equation*}

(ii) For 
$$C_i(x):=\big(e^{D(\omega)}-1\big)^{-1}\int_0^{\omega}f_i(s,x_{s})\, ds,\q x\in C_\omega^+,$$
we have $(\Phi_i x)(t)\le e^{D(\omega)}C_i(x)$ and $(\Phi_i x)(t)\ge C_i(x)$ for all $i$ and $t\in [0,\omega]$, thus it follows that $\Phi(K)\subset K$ if $0<\sigma\le e^{-D(\omega)}$, for $K=K({\sigma})$.

(iii) Clearly $\Phi$ is continuous. For $f_i$ given by \eqref{f_Chem_y}, denote $f=(f_1,\dots,f_n)$. The function $F(t,x):=f(t,x_t)$ for $ t\in\R, x\in C_\omega^+:=C_\omega^+ (\R;\R^n)$
 is uniformly equicontinuous  on $t\in [0,\omega]$ on  bounded sets of $C_\omega^+$, i.e., for any $A\subset C_\omega^+$ bounded and $\vare>0$, there is $\de>0$ such that $\max_{t\in [0,\omega]}|F(t,x)-F(t,y)|<\vare$ for all  $x,y\in A$ with $\|x-y\|<\de$. By using Ascoli-Arzel\`a theorem,  one easily sees that $\Phi$ transforms bounded sets of $K$ on relative compact sets.
%(...) See  \cite{FF} for more details and complete proofs of some of the assertions above.
\end{proof}

 Next, we look for  a fixed point of   $\Phi_{|_K}:K\to K$ by employing the Krasnoselskii  fixed point theorem in the version given below (see \cite[Theorems   7.3 and 7.6]{AgMeeRegan}).%-- which includes both the  compressive and expansive forms --  .
% of Krasnoselskii theorem are  often used , which  can be found in   \cite[Section 20]{Deimling}. %\cite[Chapter 7]{AMO}.

%\begin{thm}\label{thmKras} \cite{GuoLaks} Let $X$ be a Banach space, $K$ a  cone in $X$,  and  
% $A_{r,R}:=\{y\in K:r\leq\Vert y\Vert \leq R\}$, for some $0<r<R$.
% Let $T:A_{r,R}\to K$ be a completely continuous operator. Suppose that there exist $r,R$ with $0<r<R$ such that one of the following forms is satisfied:
% 
% (a) compressive form:
%		$\|T y\|\leq R$ if $y\in K, \|y\|=R$ and $\|T y\|\ge r$ if $y\in K, \|y\|=r$;
%		
%(b) expansive form: 
%		 $\|T y\|\leq r$ if $y\in K, \|y\|=r$ and  $\|T y\|\ge R$ if $y\in K, \|y\|=R$.\\		 
%		 Then there exists a fixed point $y^*$ of  $T$ in  $A_{r,R}$.
%\end{thm}	
%
%
%OR:

%\cite[Theorem 20.1,p.~239]{Deimling}?? CONF the version below in \cite{AgMeeRegan}.

\begin{thm}\label{thmKras}\cite{AgMeeRegan} Let $K$ be a closed cone in a Banach space, $r,R\in\R^+$ with $r\ne R$, $r_0=\min\{r,R\} , R_0= \max\{r,R\}$ and   $K_{r_0,R_0}:=\{x\in K:r_0\leq\|x\| \leq R_0\}$.
 %$K_{R_0}:=\{x\in K:\|x\| \leq R_0\}$. 
 Assume that
 $T:K_{r_0,R_0} \longrightarrow K$ is a completely continuous operator such that
\begin{enumerate}
\item[(i)]% $\|Tx\|\le R$ for all $x \in K$ with $\|x\|=R$;
$Tx\ne \la x$ for all $x \in K$ with $\|x\|=R$ and all $\la>1$;
\item [(ii)] There exists $\psi \in K\setminus\{0\}$ such that $x \neq Tx + \lambda \psi$ for all $x \in K$ with $\|x\|=r$ and all $\lambda >0$.
\end{enumerate}
Then $T$ has a fixed point  in $K_{r_0,R_0}$.
\end{thm}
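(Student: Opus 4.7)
The plan is to prove Theorem \ref{thmKras} via the theory of the fixed point index on cones. Recall that a closed cone $K$ in a Banach space is a retract, so for any bounded relatively open set $U \subset K$ and completely continuous $T:\overline{U}\to K$ with no fixed points on $\partial U$, the fixed point index $i_K(T,U)$ is well defined and enjoys the usual properties: normalization ($i_K(\text{const},U)=1$ if the constant lies in $U$), homotopy invariance under admissible homotopies, additivity/excision, and the existence property that $i_K(T,U)\ne 0$ forces a fixed point in $U$. Denote by $B_\rho$ the open ball of radius $\rho$ centered at $0$ in the ambient Banach space, so $K\cap B_\rho$ is relatively open in $K$. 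Without loss of generality I may assume $T$ has no fixed points on $\{\|x\|=r\}\cup\{\|x\|=R\}$ (otherwise we are already done); I will then assemble the result from two index computations.

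The first step computes $i_K(T,K\cap B_R)=1$ using hypothesis (i). Consider the homotopy $h_1(\mu,x)=\mu Tx$ for $\mu\in[0,1]$ and $x\in K\cap\overline{B_R}$. For $\mu=0$ this is the zero map, and for $\mu\in(0,1]$, any equation $x=\mu Tx$ on $\|x\|=R$ would give $Tx=\mu^{-1}x$ with $\mu^{-1}\ge 1$; the case $\mu^{-1}>1$ is excluded by (i), and $\mu^{-1}=1$ is excluded by our assumption of no boundary fixed points. Hence $h_1$ is admissible, so by homotopy invariance and normalization, $i_K(T,K\cap B_R)=i_K(0,K\cap B_R)=1$.

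The second step computes $i_K(T,K\cap B_r)=0$ using hypothesis (ii) and the distinguished element $\psi\in K\setminus\{0\}$. Take the homotopy $h_2(\mu,x)=Tx+\mu M\psi$ with $M>0$ to be chosen. Since $T(K)\subset K$ and $K$ is a cone, $h_2$ maps into $K$. Admissibility on $\{\|x\|=r\}$ means there is no solution of $x=Tx+\mu M\psi$ for $\mu\in[0,1]$; for $\mu=0$ this is our standing assumption, and for $\mu\in(0,1]$ it is exactly the content of (ii) with $\lambda=\mu M>0$. Hence $i_K(T,K\cap B_r)=i_K(T+M\psi,K\cap B_r)$. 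Now I choose $M$ large: since $T$ is completely continuous, $C:=\sup\{\|Tx\|:x\in K\cap\overline{B_r}\}<\infty$, and any fixed point of $T+M\psi$ in $K\cap\overline{B_r}$ would satisfy $M\|\psi\|=\|x-Tx\|\le r+C$. Picking $M>(r+C)/\|\psi\|$ rules this out, so $T+M\psi$ has no fixed point in $K\cap\overline{B_r}$, and therefore $i_K(T+M\psi,K\cap B_r)=0$.

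The conclusion follows by additivity. Writing $r_0=\min\{r,R\}$ and $R_0=\max\{r,R\}$, excision gives
\begin{equation*}
i_K\bigl(T,\,K\cap(B_{R_0}\setminus\overline{B_{r_0}})\bigr)
= i_K(T,K\cap B_{R_0}) - i_K(T,K\cap B_{r_0}) = \pm 1,
\end{equation*}
which is nonzero in either of the two cases $r<R$ or $r>R$. The existence property then produces a fixed point of $T$ in $K\cap(B_{R_0}\setminus\overline{B_{r_0}})\subset K_{r_0,R_0}$. The main obstacle I expect is the second index computation: one must package (ii) correctly as the admissibility of a translation homotopy and then exploit compactness of $T$ on the bounded set $K\cap\overline{B_r}$ to push the translate off the ball; the first computation and the final additivity step are essentially formal once the index theory on cones is in place.
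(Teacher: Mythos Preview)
The paper does not give its own proof of this statement: Theorem \ref{thmKras} is quoted from \cite[Theorems 7.3 and 7.6]{AgMeeRegan} and used as a black box, so there is no argument in the paper to compare against. Your approach via the fixed point index on cones is the standard one and is, in outline, correct.

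There is one technical point you have glossed over. By hypothesis $T$ is only defined on the closed annulus $K_{r_0,R_0}=\{x\in K:r_0\le\|x\|\le R_0\}$, yet both of your index computations require $T$ on the full sets $K\cap\overline{B_R}$ and $K\cap\overline{B_r}$, which are not contained in $K_{r_0,R_0}$ (for instance, $K\cap\overline{B_{r_0}}$ meets the annulus only on the sphere $\|x\|=r_0$). Before running the homotopies you must first extend $T$ to a completely continuous map $\tilde T:K\cap\overline{B_{R_0}}\to K$; this is routine via Dugundji's extension theorem, since $T(K_{r_0,R_0})$ is relatively compact in the convex set $K$, so the extension takes values in the closed convex hull of that image and remains compact. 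Hypotheses (i) and (ii) concern only the spheres $\|x\|=R$ and $\|x\|=r$, where $\tilde T=T$, so they transfer verbatim to $\tilde T$; your two index computations and the additivity step then go through for $\tilde T$, yielding a fixed point of $\tilde T$ in the open annulus, where again $\tilde T=T$. With this extension step made explicit, the proof is complete.
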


The above theorem includes both the compressive (with $r<R$) and expansive (with $r>R$) forms of Krasnoselskii's  fixed point theorem. 
To apply the  compressive form of Theorem \ref{thmKras} to \eqref{Chem_y} in a cone $K=K(\sigma)$, we  must impose an assumption assuring a   kind of  {\it ``superlinear"} growth at 0. As it will become clear,  a {\it ``sublinear"} growth at $\infty$ is a priori guaranteed by the dissipativeness of the system.

\begin{thm}\label{thm3.3}Consider \eqref{Chem} with $S_0(t),d(t)$ satisfying (H1)-(H2)  and assume  the following condition:
\begin{itemize}
\item[(H3)]    the unique $\omega$-periodic solution $y^*(t)$ of \eqref{y_ode} satisfies
$$\min_{t\in[0,\omega]}\int_{t}^{t+\omega}e^{\int _{t+\tau_i}^{s} d(r)\, dr}p_i(y^*(s))\, ds>e^{\int_0^\omega d(r)\, dr}-1,\q i=1,\dots,n.$$\end{itemize}
Then there exists a  nonnegative $\omega$-periodic solution $(S^*(t),x_1^*(t),\dots, x_n^*(t))$ of  \eqref{Chem}. Moreover, $0<S^*(t)<y^*(t)$, there exists $i\in \{1,\dots,n\}$ such that $x_i^*(t)>0$ for $ t\in\R$ and
$$ \min_{t\in [0,\omega]}x_j^*(t)\ge e^{-\int_0^\omega d(r)\, dr}\max_{t\in [0,\omega]}x_j^*(t),\q j=1,\dots,n.$$
\end{thm}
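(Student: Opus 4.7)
The plan is to apply the compressive form of Krasnoselskii's fixed point theorem (Theorem 3.2) to the operator $\Phi$ of Lemma 3.1 on the cone $K = K(\sigma)$ with $\sigma = e^{-D(\omega)}$. Lemma 3.1 already ensures that $\Phi : K \to K$ is completely continuous, so a fixed point $x^* \in K$ with $r \le \|x^*\| \le R$ will yield an $\omega$-periodic solution of \eqref{Chem_y}, from which $S^*(t)$ is recovered via $S^*(t) = y^*(t) - \sum_i e^{\int_t^{t+\tau_i} d(s)\, ds}\, x_i^*(t+\tau_i)$. It remains to verify the Krasnoselskii conditions for suitable $0 < r < R$ and then to establish the stated sign properties.

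For condition (i), I take $R > \ol{y^*}\, e^{D(\omega)}$. If $x \in K$ satisfies $\|x\| = R$, fix an index $k$ with $\|x_k\|_\infty = R$; the cone inequality gives $x_k(t) \ge e^{-D(\omega)} R > \ol{y^*}$ for every $t$. Since $\tau_k > 0$ forces $\int_{s-\tau_j}^{s+\tau_k-\tau_j} d(r)\, dr \ge 0$, the $k$-th summand in the argument of $p_j^*(s,x_s)$ is at least $x_k(s+\tau_k-\tau_j) \ge e^{-D(\omega)} R > \ol{y^*} \ge y^*(s-\tau_j)$. Thus the argument of $\tilde p_j$ is negative for every $s$ and every $j$, so $f_j \equiv 0$ and $\Phi x \equiv 0$, which rules out $\Phi x = \lambda x$ with $\|x\| = R > 0$ and $\lambda > 1$.

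The core step is condition (ii), for which I pick $\psi$ equal to the constant vector function $(1, 0, \dots, 0) \in K \setminus \{0\}$. The linearization of $\Phi_j$ at $0$ is the positive scalar operator
\[
(L_j h)(t) = \big(e^{D(\omega)}-1\big)^{-1} \int_t^{t+\omega} e^{\int_t^{s-\tau_j} d(r)\, dr}\, p_j(y^*(s-\tau_j))\, h(s-\tau_j)\, ds,
\]
and the substitution $u = s - \tau_j$ yields $(L_j \mathbf{1})(t) = (e^{D(\omega)}-1)^{-1} A_j(t-\tau_j)$, where $A_j$ is the integral in (H3). Hypothesis (H3) then gives $L_j \mathbf{1} \ge (1+\delta)\mathbf{1}$ pointwise for some $\delta > 0$. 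Exploiting the local Lipschitz property of $p_j$ together with $p_j(y^*(s-\tau_j)) \ge p_j(\ul{y^*}) > 0$, I can fix $r > 0$ small enough that $\Phi_j x \ge (1-\epsilon) L_j x_j$ whenever $x \in K$ with $\|x\| \le r$, and with $\epsilon$ chosen so that $\rho := (1-\epsilon)(1+\delta) > 1$. If $x = \Phi x + \lambda \psi$ for some $\lambda > 0$ and $\|x\| = r$, then $x_1 \ge \lambda \mathbf{1}$, and iterating $x_1 \ge (1-\epsilon) L_1 x_1 + \lambda \mathbf{1}$ using $L_1 \mathbf{1} \ge (1+\delta)\mathbf{1}$ produces $x_1 \ge \lambda c_n \mathbf{1}$ with $c_{n+1} = \rho c_n + 1$, so $c_n \to \infty$, contradicting $\|x_1\|_\infty \le r$.

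Krasnoselskii then delivers a fixed point $x^* \in K$ with $r \le \|x^*\| \le R$. The cone inequality $\min_t x_j^*(t) \ge e^{-D(\omega)} \max_t x_j^*(t)$ is built into $K$, and since $x^* \ne 0$ any index $i$ with $x_i^* \not\equiv 0$ satisfies $x_i^*(t) > 0$ for every $t$, which in turn forces $S^*(t) < y^*(t)$. A computation paralleling the derivation of \eqref{Chem_y} shows that $(S^*, x^*)$ satisfies \eqref{Chem} with $\tilde p_j$ in place of $p_j$, so
\[
S^{*\prime}(t) = d(t)(S_0(t) - S^*(t)) - \sum_i \tilde p_i(S^*(t))\, x_i^*(t);
\]
on any set where $S^*(t) \le 0$ every $\tilde p_i(S^*(t))$ vanishes and $S^{*\prime}(t) \ge d(t) S_0(t) > 0$, which is incompatible with the $\omega$-periodicity of $S^*$ unless that set is empty, yielding $S^*(t) > 0$ for all $t$. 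The main obstacle is the iterative bound in (ii): it requires extracting the strict spectral inequality $L_j \mathbf{1} > \mathbf{1}$ from (H3), carefully controlling the continuity loss $\epsilon(r)$, and using positive-operator iteration to force $c_n \to \infty$.
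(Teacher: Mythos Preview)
Your proof is correct and follows essentially the same strategy as the paper: apply Krasnoselskii (Theorem~3.2) to the operator $\Phi$ on the cone $K(e^{-D(\omega)})$ from Lemma~3.1, kill condition~(i) by showing $\Phi x\equiv 0$ once $\|x\|=R$ is large enough, and extract condition~(ii) from (H3). The only noteworthy deviations are cosmetic: for (ii) the paper takes $\psi=(1,\dots,1)$ and obtains a one-line contradiction at the global minimum $\mu=\min_{i,t}x_i(t)$ via $(\Phi_i x)(t)\ge c_i\,\ul{x_i}>\mu$, whereas you take $\psi=e_1$ and iterate the positivity of $L_1$; and for $S^*>0$ the paper simply cites \cite{Faria23}, while your minimum-point argument for the periodic $S^*$ is a valid direct substitute.
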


\begin{proof} We  show that $\Phi_{|_K}:K\to K$ satisfies (i) and (ii) in Theorem \ref{thmKras} by using  arguments similar to the ones in  \cite[Theorem 3.1]{FF}. 
%For completeness of the reader, we include the proof here.

%The  functions $p_i^*(t,x_t)$ in \eqref{pi*} are bounded along solutions $x(t)\in C_\omega^+$, $p_i^*(t,x_t)\le p_i(y^*(t-\tau_i))\le p_i\big(\ol{S^0}\big)$. Thus, we may consider the operator $\Phi$ defined in \eqref{Phi}, \eqref{f_Chem_y} with $p_i(x)$ in \eqref{pi*} replaced by $\hat p_i(x)=\begin{cases} p_i(x)\ {\rm if} \ 0\le x\le \ol{S^0}\\ p_i\big(\ol{S^0}\big)\ {\rm if}\  x\ge \ol{S^0}.\end{cases}$ For simplicity, we drop the hats in $\hat p_i(x)$.
%
%For $M>e^{-\int_{t-\tau_i}^td(s)\, ds}p_i\big(\ol{S^0}\big)$
%
%
%Since system \eqref{Chem} is dissipative, the functions $f_i(t,x_t)$ in \eqref{f_Chem_y} are bounded along solutions $x(t)\in C_\omega^+$, i.e., $f_i(t,x_t)\le M$ for some $M>0$ and all solutions $x(t)\ge 0$. 
%
%In this way, $f_i(t,x_t)\le \vare \|x\|$ for $\|x\|\ge \vare^{-1}M$ for any $\vare>0$.

For $x\in K$ with $\|x\|=R$, recall that  $\sigma  \|x_i\|\le x_i(t)\le R$ for all $i$, $t\in [0,\omega]$,  and that $ \|x_j\|=R$ for  some $j\in \{1,\dots,n\}$. Thus, we have $p_i^*(t,x_t)\le  p_i\Big (y^*(t-\tau_i)-\sigma \sum_{j=1}^n e^{\ul{d}\tau_j}\|x_j\|\Big)=0$ for $R>0$  chosen sufficiently large  so that $\sigma e^{\ul{d}\min_j\tau_j} R\ge \ol{S_0}$.
%Since system \eqref{Chem} is dissipative, the functions $f_i(t,x_t)$ in \eqref{f_Chem_y} are bounded along solutions $x(t)\in C_\omega^+$, i.e., $f_i(t,x_t)\le M$ for some $M>0$ and all solutions $x(t)\ge 0$. 
In this way, if  $x\in K$ with  $\|x\|=R$ we have   $f_i(t,x_t)\equiv 0$ for all $i$, and  in particular condition (i) in Theorem \ref{thmKras} is fulfilled.
%we conclude that $\Phi x\ne \la x$ for all $\la >1$, $x\in K$ with $\|x\|=R$, hence condition (i) in Theorem \ref{thmKras} is fulfilled.
%$f_i(t,x_t)\le \vare \|x\|$.

%\
%
%
%For $x\in K$ with $\|x\|=\vare^{-1}M=:R$, 
%	let $i$ be such that $\|x\|=\|x_i\|=R$. For such $i$, we have $x_i(t)\le R$ and $x_i(t)\ge \sigma  R$ for $t\in [0,\omega]$. 	
	
%	Using the definition of $\Phi$, we have
% \begin{equation}\label{3.6}
% \begin{split}\|\Phi_i x\|&=\big(e^{D(\omega)}-1\big)^{-1}\max_{t\in[0,\omega]}\int_t^{t+\omega} e^{\int _t^sd(r)\, dr}f_i^*(s,x_s)\, ds\\
% &\le \vare R\big(e^{D(\omega)}-1\big)^{-1}\max_{t\in[0,\omega]}\int_t^{t+\omega} e^{\int _t^sd(r)\, dr}ds\le 1
% \end{split}\end{equation}
%for $\vare>0$ sufficiently small. In particular, we conclude that $\Phi x\ne \la x$ for all $\la >1$, $x\in K$ with $\|x\|=R$, hence condition (i) in Theorem \ref{thmKras} is fulfilled.
%moreover,  for $C_i^\infty$ as in  \eqref{sublinear_infty} we have $C_i^\infty\le 1$ for $\vare$ sufficiently small. It remains to verify that condition (H6)(a) (or (H6')(a)) is satisfied.
%
%Consider  the cone $K=\{x\in C_\omega^+: x_i(t)\ge \sigma \|x_i\|, t\in [0,\omega], i=1,\dots,n\}$.
 %Since $p_i(x)$ are  non-decreasing functions,
 
 Next, we show that (H3) implies condition (ii) in Theorem \ref{thmKras}, for some $r>0$ sufficiently small. Denote $\ul{x_i}=\min_{t\in[0,\omega]} x_i(t)$.
 For $f_i \, (1\le i\le n)$ as in \eqref{f_Chem_y} and $x\in C_\omega^+$, $t\in\R$, observe that
 \begin{equation}\label{f_yy}
\begin{split}
f_i(t,x_t)&\ge e^{-\int_{t-\tau_i}^td(s)\, ds}p_i^*(t,x_t)\ul{x_i},\\
&\ge e^{-\int_{t-\tau_i}^td(s)\, ds}p_i\Big(y^*(t-\tau_i)-\sum_{j=1}^n e^{\ol{d}\tau_j}\|x\|\Big)\ul{x_i},\ \  i=1,\dots,n,  t\in\R.
\end{split}
\end{equation}
Clearly, the function % $d(t),y^*(t)$ are $\omega$-periodic functions, it is clear that 
$F_i(t):=\int_{t}^{t+\omega}e^{\int _t^{s-\tau_i} d(r)\, dr}p_i(y^*(s-\tau_i))\, ds$
is $\omega$-periodic, thus $\min_{t\in[0,\omega]}F_i(t)=\min_{t\in[0,\omega]}F_i(t+\tau_i)$, i.e.,
\begin{equation*}
\begin{split}\min_{t\in[0,\omega]}\int_{t}^{t+\omega}e^{\int _t^{s-\tau_i} d(r)\, dr}p_i(y^*(s-\tau_i))\, ds%&=\min_{t\in[0,\omega]}\int_{t-\tau_i}^{t+\omega-\tau_i}e^{\int _t^{s} d(r)\, dr}p_i(y^*(s))\, ds\\
&=\min_{t\in[0,\omega]}\int_{t}^{t+\omega}e^{\int _{t+\tau_i}^{s} d(r)\, dr}p_i(y^*(s))\, ds,\end{split}\end{equation*}
for $i=1,\dots,n.$ From (H3), for all $i$
\begin{equation}\label{estimateFi}F_i(t)>e^{\int_0^\omega d(r)\, dr}-1\q {\rm for}\q t\in [0,\omega].\end{equation}
Fix $\de>0$ small enough so that $\min_{t\in[0,\omega]}F_i(t)>e^{\int_0^\omega d(r)\, dr}-1+\de \frac{e^{\ol{d}\omega}-1}{\ol{d}}$.
On the other hand, from the uniform continuity of $p_i(x)$ on $[\ul{S_0},\ol{S_0}]$, there is $\vare>0$ such that $p_i(x)-p_i(x-\vare)<\de$ for all $x\in[\ul{S_0},\ol{S_0}]$, and
\eqref{estimateFi} therefore yields
\begin{equation}\label{newh3}\min_{t\in[0,\omega]}\int_t^{t+\omega }e^{\int _t^{s-\tau_i} d(r)\, dr}p_i(y^*(s-\tau_i)-\vare)\, ds>e^{\int_0^\omega d(r)\, dr}-1,\end{equation}
for $i=1,\dots,n$.
Choose %$b_{i}(t)=e^{-\int_{t-\tau_i}^td(s)\, ds}p_i\big(y^*(t-\tau_i)-\vare\big)$, and 
  $r=\frac{1}{n}\vare  e^{-\ol{d}\tau}$, where as before $\tau=\max_{1\le i\le n}\tau_i$. For $x\in K$ with
 $\|x\|\le r$, we have $\sum_{j=1}^n e^{\ol{d}\tau_j}\|x\|\le \vare$. Together with  \eqref{f_yy} this leads to
%$$f_i(t,x_t)\ge b_{i}(t)\ul{x_i}, \q i=1,\dots,n,  t\in\R.$$
% %For the constants $c_i^0$ in \eqref{sublinear0},
% From \eqref{newh3}, we now obtain
 \[
(\Phi_i x)(t)\ge 
c_i  \ul{x_i}\] where
\begin{equation}\label{sublinear0_Chem0}
\begin{split}
c_i:=\big(e^{\int_0^\omega d(r)\, dr}-1\big)^{-1} \min_{t\in[0,\omega]}\int_t^{t+\omega}e^{\int _t^{s-\tau_i} d(r)\, dr} p_i\big(y^*(s-\tau_i)-\vare\big)\, ds>1, \  i=1,\dots,n.
\end{split}\end{equation}

Suppose now that there are $\la>0$ and   $x\in K$ with $\|x\|=r$, such that 
 $x=\Phi x+\la {\bf 1}$, where ${\bf 1}$ is interpreted as the constant function $(1,\dots,1)$, so that $x_i=\Phi_ix+\la,\, i=1,\dots,n$.
		%In particular, $y\ge \la>0$, thus $ y\in int(X^+)$. 
		Let $\mu:=\min_{t\in [0,\omega]}\min_{1\le i\le n}x_i(t)$. Observe that $0<\la\le \mu \le x_i(t)\le r$  for $t\in [0,\omega], i=1,\dots,n$. From \eqref{sublinear0_Chem0} we have $c_i>1$, which   yields 
$$(\Phi_i x)(t)\ge \mu c_i> \mu,\qq t\in [0,\omega], i=1,\dots,n.$$
% \begin{equation}\label{3.7}(\Phi_i x)(t)\ge \mu\,\big(e^{\int_0^\omega d(r)\, dr}-1\big)^{-1}\,\min_{t\in[0,\omega]}\int_t^{t+\omega} e^{\int _t^sd_i(r)\, dr}\Big (\sum_{j\ne i}a_{ij}(s)+b_{1i}(s)\Big )\, ds\ge\mu.\end{equation}
Choose 
		$t^*\in [0,\omega]$ and $i^*\in \{1,\dots,n\}$ such that $x_{i^*}(t^*)< \mu+\la$. We obtain
\begin{equation*}
\begin{split}
\mu&> x_{i^*}(t^*)-\la=(\Phi_{i^*} x)(t^*)>\mu,
 \end{split}
\end{equation*}	
which is a contradiction.	Therefore, Theorem \ref{thmKras} guarantees that $\Phi$  admits a fixed point $x^*$  in $K_{r,R}=\{x\in K:r\leq\|x\| \leq R\}$, which is
an $\omega$-periodic solution   $x^*(t):=(x_1^*(t),\dots,x_n^*(t))$ of \eqref{Chem_y} in $K\setminus \{0\}$. This means that  \eqref{Chem} admits an $\omega$-periodic  solution $(S^*(t),x_1^*(t),\dots, x_n^*(t))$ with  the first component $S^*(t)$ given by the relation \eqref{S(t)}. %with $S^*(t)<y^*(t),\, t\in\R$.

%Next, 
%assume (H3). There is $r_0>0$ such that $p_i(x)>\max_{t\in [0,\omega]} \bigg (d(t)e^{\int_{t-\tau_i}^t d(r)\, dr}\bigg)$ for $x\in [S_0-r_0,S_0]$ and  each $i\in \{1,\dots,n\}$, hence
%\begin{equation*}
%\begin{split}
%c_i^0
%&>
%\big(e^{\int_0^\omega d(r)\, dr}-1\big)^{-1}\, \min_{t\in[0,\omega]}\int_t^{t+\omega}e^{\int _t^s d(r)\, dr}d(s)\, ds=1, \q i=1,\dots,n,
%\end{split}\end{equation*}
%Again, Theorem \ref{thmP.1} guarantees the existence  of an $\omega$-periodic solution  $x^*(t)$.
%

Next, from \cite{Faria23}, $S^*(t)>0$ for $t\in [0,\omega]$. 
Suppose that $S^*(t_1)=y^*(t_1)$ for some $t_1$. Thus $x_i^*(t_1)=0$ for all $i$ 
and the definition of $K$ leads to $x_i^*(t)\equiv 0$ for $i=1,\dots,n$,  therefore $x^*(t)\equiv 0$. But this contradicts the fact that $x^*\in K\setminus \{0\}$.  Hence, $0<S^*(t)< y^*(t)$ for all $t\in [0,\omega]$,  and $\|x_i^*\| >0$ for some $i\in \{1,\dots,n\};$ since $x^*\in K$, again we obtain that $x_i(t)$ is positive on $[0,\omega]$.
\end{proof}

Several immediate consequences of this theorem are drawn below. To start with, instead of (H3) one may impose either a pointwise or an average comparison condition between $p_i(y^*(t))$ and $d(t)$, which are more restrictive but  easier to verify than (H3).

\begin{thm}\label{thm3.4}For \eqref{Chem} with  $d,S_0\in C_\omega(\R)$ and positive, assume  (H2) and that  the unique $\omega$-periodic solution   $y^*(t)$  of \eqref{y_ode} satisfies one of the following conditions:
\begin{itemize}
\item[(H3.a)] $\dps\min_{t\in [0,\omega]} \Big (p_i( y^*(t))-d(t+\tau_i)e^{\int_t^{t+\tau_i} d(r)\, dr}\Big)>0,\ i=1,\dots,n;$
\item[(H3.b)] $\dps \int_0^\omega p_i( y^*(t))\, dt \ge \Big (e^{\int_0^\omega d(r)\, dr}-1\Big) \max_{t\in [0,\omega]}e^{\int_{t}^{t+\tau_i} d(r)\, dr},\ i=1,\dots,n.$
%\item[(H3.a*)] (TIRAR!!)
%$\dps \min_{t\in [0,\omega]} \frac{p_i( y^*(t))}{d(t)}> \max_{t\in [0,\omega]}e^{\int_{t}^{t+\tau_i} d(r)\, dr}, \, i=1,\dots,n;$
\end{itemize}
Then there exists (at least) one  nonnegative $\omega$-periodic solution $(S^*(t),x_1^*(t),\dots, x_n^*(t))$ of  \eqref{Chem}, with $0<S^*(t)<y^*(t)$ and $x_i^*(t)>0,\ t\in\R$,
 for some $i\in \{1,\dots,n\}$. 
\end{thm}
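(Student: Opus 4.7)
The plan is to deduce condition (H3) of Theorem \ref{thm3.3} from each of (H3.a) and (H3.b) separately, and then invoke Theorem \ref{thm3.3} to obtain the periodic solution together with all the stated positivity properties. The reduction is essentially bookkeeping with exponential integrals and $\omega$-periodicity of $d$ and $y^*$.

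For (H3.a), I would exploit a perfect-derivative cancellation. Multiplying the strict pointwise inequality $p_i(y^*(s))>d(s+\tau_i)e^{\int_s^{s+\tau_i}d(r)\,dr}$ by the positive weight $e^{\int_{t+\tau_i}^s d(r)\,dr}$ and combining exponents yields
\[
e^{\int_{t+\tau_i}^s d(r)\,dr}p_i(y^*(s))>d(s+\tau_i)\,e^{\int_{t+\tau_i}^{s+\tau_i}d(r)\,dr}.
\]
Integrating over $s\in[t,t+\omega]$ and substituting $u=s+\tau_i$, the right-hand side becomes $\int_{t+\tau_i}^{t+\omega+\tau_i}\tfrac{d}{du}\bigl(e^{\int_{t+\tau_i}^u d(r)\,dr}\bigr)\,du$, which collapses to $e^{D(\omega)}-1$ by $\omega$-periodicity of $d$. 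The resulting strict inequality is uniform in $t$ and its left-hand side is continuous and $\omega$-periodic in $t$, so its minimum on $[0,\omega]$ still strictly exceeds $e^{D(\omega)}-1$; this is (H3).

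For (H3.b), the key observation is that $s\mapsto\int_{t+\tau_i}^s d(r)\,dr$ is strictly increasing (since $d>0$), so its minimum on $[t,t+\omega]$ is at $s=t$, giving $e^{\int_{t+\tau_i}^s d(r)\,dr}\ge e^{-\int_t^{t+\tau_i}d(r)\,dr}$ with strict inequality for $s>t$. Multiplying by $p_i(y^*(s))>0$ (note $y^*>0$ and (H2)), integrating, and using $\omega$-periodicity of $p_i\circ y^*$ to rewrite $\int_t^{t+\omega}p_i(y^*(s))\,ds=\int_0^\omega p_i(y^*(s))\,ds$, I would then apply (H3.b) and $e^{-\int_t^{t+\tau_i}d(r)\,dr}\max_{t'\in[0,\omega]}e^{\int_{t'}^{t'+\tau_i}d(r)\,dr}\ge 1$ to obtain the chain
\[
\int_t^{t+\omega}e^{\int_{t+\tau_i}^s d(r)\,dr}p_i(y^*(s))\,ds > e^{-\int_t^{t+\tau_i}d(r)\,dr}\int_0^\omega p_i(y^*(s))\,ds\ge e^{D(\omega)}-1,
\]
which holds uniformly in $t$ and therefore yields (H3).

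The main subtlety, and the step needing the most care, is that (H3.b) is posed with a non-strict $\ge$ while (H3) requires a strict $>$. This is rescued by the strict pointwise bound on the exponential on the set $\{s>t\}$ of full measure together with positivity of $p_i(y^*)$, which makes the first link of the displayed chain strict; the (H3.a) case needs no such argument because the strictness is built into the hypothesis. Once (H3) is verified in either case, Theorem \ref{thm3.3} applies verbatim and produces the claimed $\omega$-periodic solution $(S^*(t),x_1^*(t),\dots,x_n^*(t))$ with $0<S^*(t)<y^*(t)$ on $\R$ and $x_i^*(t)>0$ for some index $i$.
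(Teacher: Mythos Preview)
Your proposal is correct and follows essentially the same approach as the paper: reduce each of (H3.a) and (H3.b) to (H3) by the same exponential-weight computations and then invoke Theorem~\ref{thm3.3}. Your explicit handling of the strictness issue under (H3.b) (via $p_i(y^*)>0$ and strict monotonicity of $s\mapsto\int_{t+\tau_i}^s d$) makes transparent a step the paper records only as a single $>$ sign, but the two arguments are otherwise identical.
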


\begin{proof}  It suffices to show that each one of conditions (H3.a), (H3.b) implies (H3). If (H3.a) holds, for  $t\in [0,\omega], i=1,\dots,n$, we obtain
$$\int_{t}^{t+\omega}e^{\int _{t+\tau_i}^{s} d(r)\, dr}p_i(y^*(s))\, ds>\int_{t}^{t+\omega}e^{\int _{t+\tau_i}^{s+\tau_i} d(r)\, dr}d(s+\tau_i)\, ds
=e^{\int_0^\omega d(r)\, dr}-1.$$
Note also that
$$\int_{t}^{t+\omega}e^{\int _{t+\tau_i}^{s} d(r)\, dr}p_i(y^*(s))\, ds> e^{-\int_{t}^{t+\tau_i} d(r)\, dr}\int_0^\omega p_i( y^*(s))\, ds.$$
%(TIRAR) Next, assuming (H3.a*), for  $t\in [0,\omega], i=1,\dots,n$, we have
%\begin{equation*}
%\begin{split}\int_{t}^{t+\omega}e^{\int _{t+\tau_i}^{s} d(r)\, dr}p_i(y^*(s))\, ds&=e^{-\int_{t}^{t+\tau_i} d(r)\, dr}\int_{t}^{t+\omega}e^{\int _{t}^{s} d(r)\, dr}p_i(y^*(s))\, ds\\
%&> \int_{t}^{t+\omega}e^{\int _t^{s} d(r)\, dr}d(s)\, ds 
%=e^{\int_0^\omega d(r)\, dr}-1.\end{split}\end{equation*}
This establishes that  (H3) is fulfilled if either (H3.a) or (H3.b) holds.
\end{proof}

 \begin{rmk}\label{rmk3.1} 
  %For model \eqref{Chem0} with $d(t)$ periodic and $p_i(x)$ as in (H2), if 
{\rm The reversed inequalities in (H3.a), i.e., for  $p_i(y^*(t))< d(t+\tau_i)e^{\int_{t}^{t+\tau_i}d(r)\, dr},\, t\in [0,\omega],  i=1,\dots,n$,  
 guarantee that the washout solution $(y^*(t),0,\dots, 0)$ is globally attractive \cite{Faria23}. On the other hand, 
 %although $ \min_{t\in [0,\omega]} \frac{p_i( y^*(t))}{d(t)}> \max_{t\in [0,\omega]}e^{\int_{t}^{t+\tau_i} d(r)\, dr} \,(1\le i\le n)$ implies (H3), 
  $p_i( y^*(t))> d(t)e^{\int^{t+\tau_i}_t d(r)\, dr},\, t\in [0,\omega]\,(1\le i\le n)$ a priori  is not a sufficient condition to derive that \eqref{Chem} possesses  a nonnegative and nontrivial $\omega$-periodic solution. In any case, one can easily verify that
  \begin{itemize}
\item[(H3.c)] 
$\dps \min_{t\in [0,\omega]} \frac{p_i( y^*(t))}{d(t)}> \max_{t\in [0,\omega]}e^{\int_{t}^{t+\tau_i} d(r)\, dr}, \, i=1,\dots,n,$
\end{itemize}
implies (H3), and consequently the conclusion in Theorem \ref{thm3.3} follows.
 Further comments are presented  in Remark \ref{rmk2.3}.} \end{rmk}
  
  In what follows, ${\cal M}(a)$ denotes the mean value of an $\omega$-periodic continuous function $a(t)$, ${\cal M}(a)=\omega^{-1}\int_0^{\omega }a(t)\, dt$.

\begin{rmk}\label{rmk3.2} {\rm 
%Denote  ${\cal M}(a)$ as the mean value of an $\omega$-periodic continuous function $a(t)$, ${\cal M}(a)=\omega^{-1}\int_0^{\omega }a(t)\, dt$. 
Consider the mean value of $p_i(y^*(t))$,
${\cal M}(p_i(y^*))=\omega^{-1}\int_0^{\omega }p_i(y^*(t))\, dt$. If ${\cal M}(p_i(y^*)) >\omega^{-1} (e^{\int_0^\omega d(r)\, dr}-1)$ for $ i=1,\dots,n$ and the delays $\tau_i$ are sufficiently small, then  (H3.b) is satisfied. For the competitive chemostat ODE model obtained from \eqref{Chem}  by taking all the delays equal to zero, condition ${\cal M}(p_i(y^*))>{\cal M}(d)$ guarantees that a nonnegative, nontrivial periodic solution exists, see\cite{WZ,Zhaobook}.
 Clearly, condition ${\cal M}(p_i(y^*))>{\cal M}(d)$ is
 less restrictive than ${\cal M}(p_i(y^*)) >\omega^{-1} (e^{\int_0^\omega d(r)\, dr}-1)$. However,  from the   above criteria ${\cal M}(p_i(y^*))>{\cal M}(d)$ is not sufficient to deduce the existence of such a  nontrivial periodic solution for \eqref{Chem}. 
 %In the case of no delays ($\tau_i=0$ for all $i$), 
  For small delays $\tau_i<\omega$, it is therefore natural to inquire whether the above result can be   deduced with (H3.a) or (H3.b) replaced by the weaker assumption ${\cal M}(p_i(y^*))>{\cal M}(G_i)$, for
 $G_i(t)= d(t+\tau_i)e^{\int^{t+\tau_i}_t d(r)\, dr}$.}
  \end{rmk}  

The above proofs also show that if either assumption (H3) or one of its variants (H3.a) or (H3.b)   holds  for a fixed $i\in\{ 1,\dots,n\}$, then  \eqref{Chem} possesses an $\omega$-periodic solution of the form  $(S_i^*(t),0,\dots, x_i^*(t), 0,\dots, 0)$ with $x_i^*(t)>0$. To deduce this, it is sufficient to restrict the analysis to the planar   system
\begin{equation}\label{Chemi}
 \begin{split}
 S'(t)&=d(t)(S_0(t)-S(t))-\sum_{i=1}^np_i(S(t))x_i(t)\\
x_i'(t)&=-d(t)x_i(t)+e^{-\int_{t-\tau_i}^td(s)\, ds}p_i(S(t-\tau_i))x_i(t-\tau_i),\\
\end{split}
\end{equation}
for each $i\in\{1,\dots,n\}$ fixed.
 Thus, Theorems \ref{thm3.3} and \ref{thm3.4} further provide criteria  for  the  existence of $n$  nonnegative and nontrivial $\omega$-periodic solutions.

\begin{cor}\label{cor3.1} Under the conditions of Theorem \ref{thm3.3} or Theorem \ref{thm3.4}, system \eqref{Chem} has  (at least) $n$  nonnegative $\omega$-periodic solutions of the form $(S_i^*(t),0,\dots, x_i^*(t), 0,\dots, 0)$ $(1\le i\le n)$ with $0<S_i^*(t)<y^*(t)$ and $x_i^*(t)>0$, $t\in\R$.
\end{cor}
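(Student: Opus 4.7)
The plan is to exploit the invariance of the boundary faces of the nonnegative cone on which all but one species vanishes, reducing the problem to $n$ separate single-species subsystems, and then to invoke Theorem \ref{thm3.3} (or Theorem \ref{thm3.4}) applied with $n=1$ to each of them.

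Fix any $i\in\{1,\dots,n\}$ and consider the affine subspace of $C^+$ where $x_j\equiv 0$ for all $j\ne i$. I first observe that this set is positively invariant for \eqref{Chem}: each equation for $x_j$ ($j\ne i$) has the form $x_j'(t)=-d(t)x_j(t)+e^{-\int_{t-\tau_j}^t d(s)\,ds}p_j(S(t-\tau_j))x_j(t-\tau_j)$, so an initial condition with $x_j\equiv 0$ on $[-\tau_j,0]$ yields $x_j(t)\equiv 0$ for all $t\ge 0$ by uniqueness of solutions. Consequently, solutions starting on that face satisfy the two-equation system
\begin{equation*}
\begin{split}
S'(t)&=d(t)(S_0(t)-S(t))-p_i(S(t))x_i(t),\\
x_i'(t)&=-d(t)x_i(t)+e^{-\int_{t-\tau_i}^t d(s)\,ds}p_i(S(t-\tau_i))x_i(t-\tau_i),
\end{split}
\end{equation*}
which is precisely the single-species chemostat model \eqref{Chem} with $n=1$ and the same $d(t), S_0(t)$.

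Since the auxiliary scalar ODE \eqref{y_ode} depends only on $d(t)$ and $S_0(t)$, its unique positive $\omega$-periodic solution $y^*(t)$ is the same for the reduced subsystem as for the full system. I then observe that hypothesis (H3) for the reduced single-species subsystem reads exactly
$$\min_{t\in[0,\omega]}\int_t^{t+\omega}e^{\int_{t+\tau_i}^{s}d(r)\,dr}p_i(y^*(s))\,ds>e^{\int_0^\omega d(r)\,dr}-1,$$
which is the $i$-th inequality in the original (H3); analogously, the $i$-th inequalities in (H3.a) and (H3.b) coincide with the respective hypotheses of Theorem \ref{thm3.4} applied to the reduced subsystem.

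Applying Theorem \ref{thm3.3} (resp.\ Theorem \ref{thm3.4}) to this single-species subsystem therefore yields an $\omega$-periodic solution $(S_i^*(t),x_i^*(t))$ with $0<S_i^*(t)<y^*(t)$ and $x_i^*(t)>0$ for all $t\in\R$. Embedding this solution into the full system by setting $x_j^*(t)\equiv 0$ for $j\ne i$ produces an $\omega$-periodic solution of \eqref{Chem} of the desired form $(S_i^*(t),0,\dots,x_i^*(t),0,\dots,0)$. Running $i$ over $\{1,\dots,n\}$ gives the $n$ required nonnegative, nontrivial periodic solutions. The argument is essentially mechanical once invariance of the face is verified; the only point requiring care is checking that the hypotheses of Theorems \ref{thm3.3}/\ref{thm3.4} are inherited component-wise, which as shown above is immediate since $y^*(t)$ and the inequalities defining (H3), (H3.a), (H3.b) decouple across indices.
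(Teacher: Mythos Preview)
Your proof is correct and follows essentially the same approach as the paper: the paper also restricts the analysis to the planar single-species subsystem \eqref{Chemi} for each fixed $i$, noting that the $i$-th inequality in (H3) (or (H3.a), (H3.b)) is precisely the hypothesis needed to apply Theorems \ref{thm3.3} or \ref{thm3.4} with $n=1$. Your argument is slightly more explicit in justifying the invariance of the face $\{x_j\equiv 0,\ j\ne i\}$ and in checking that $y^*(t)$ and the hypotheses decouple across indices, but the underlying idea is identical.
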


 For a chemostat model with a single micro-organism, we obtain:

\begin{cor}\label{cor3.3} Consider the chemostat model
 \begin{equation}\label{Chem_one}
 \begin{split}
 S'(t)&=d(t)(S_0(t)-S(t))-\sum_{i=1}^n p(S(t))x(t)\\
x'(t)&=-d(t)x(t)+e^{-\int_{t-\tau}^td(s)\, ds}p(S(t-\tau))x(t-\tau),
\end{split}
\end{equation}
with $d(t),S_0(t)\in C_\omega ^+(\R)$ and positive, $\tau>0$ and $p(x)$ continuous, non-decreasing with $p(0)=0, p(x)>0$ for $x>0$. For    $y^*(t)$  the unique $\omega$-periodic solution of \eqref{y_ode}, suppose that
$$\min_{t\in[0,\omega]}\int_{t}^{t+\omega}e^{\int _{t+\tau}^{s} d(r)\, dr}p(y^*(s))\, ds>e^{\int_0^\omega d(r)\, dr}-1.$$
Then there is (at least) one  positive $\omega$-periodic solution $(S^*(t),x^*(t))$. In particular, this is the case if either
$$p( y^*(t))>d(t+\tau)e^{\int^{t+\tau}_t d(r)\, dr},\q t\in [0,\omega],$$
or $$ \int_0^\omega p( y^*(t))\, dt \ge \Big (e^{\int_0^\omega d(r)\, dr}-1\Big) \max_{t\in [0,\omega]}e^{\int_{t}^{t+\tau} d(r)\, dr}.$$
\end{cor}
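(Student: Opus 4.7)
The plan is to obtain this corollary as a direct specialisation of Theorem 3.3, together with its pointwise and averaged refinements in Theorem 3.4, to the single-species setting $n=1$ with $\tau_1=\tau$ and $p_1=p$. The integral inequality displayed in the hypothesis of the corollary is, by inspection, precisely condition (H3) of Theorem 3.3 read with a single index $i=1$. Therefore Theorem 3.3 applies and produces a nonnegative $\omega$-periodic solution $(S^*(t),x^*(t))$ of \eqref{Chem_one} with $0<S^*(t)<y^*(t)$ on $\R$ and with some component $x_i^*(t)>0$; since $n=1$ the only available index is $i=1$, so $x^*(t)>0$ for all $t\in\R$, giving the desired positive $\omega$-periodic solution.

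For the two sufficient conditions that follow the word ``In particular'', I would simply recognise them as the $n=1$ instances of (H3.a) and (H3.b) in Theorem 3.4: the pointwise bound $p(y^*(t))>d(t+\tau)e^{\int_t^{t+\tau}d(r)\,dr}$ is (H3.a), and the averaged bound $\int_0^\omega p(y^*(t))\,dt\geq\bigl(e^{\int_0^\omega d(r)\,dr}-1\bigr)\max_{t\in[0,\omega]}e^{\int_t^{t+\tau}d(r)\,dr}$ is (H3.b). The proof of Theorem 3.4 has already shown that each of these implies (H3), and the elementary estimates used there (integrating the pointwise inequality weighted by $e^{\int_{t+\tau}^{s}d(r)\,dr}$ and invoking $\omega$-periodicity for the first; factoring out the minimal exponential weight for the second) make no essential use of $n\geq 2$. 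Thus either assumption again places us in the situation of Theorem 3.3, yielding the same conclusion.

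The main ``obstacle'' is purely bookkeeping rather than substantive: one must observe that in the single-species case the Theorem 3.3 conclusion ``$x_i^*(t)>0$ for some $i\in\{1,\dots,n\}$'' automatically upgrades nonnegativity of the unique component $x^*$ to strict positivity, so that the solution is genuinely positive in both coordinates. No additional arguments, fixed-point computations, or comparison estimates beyond those already carried out in Theorems 3.3 and 3.4 are required.
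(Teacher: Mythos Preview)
Your proposal is correct and matches the paper's approach: Corollary~\ref{cor3.3} is stated without proof precisely because it is the immediate $n=1$ specialisation of Theorems~\ref{thm3.3} and~\ref{thm3.4}, with the single-index case automatically upgrading ``$x_i^*(t)>0$ for some $i$'' to strict positivity of the unique component $x^*$.
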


 \begin{rmk}\label{rmk3.3} {\rm The above corollary  improves a result in \cite{ARS20}, where the authors proved the existence of a positive periodic solution for \eqref{Chem_one} under more restrictive assumptions: in  \cite{ARS20} the response function $p(x)$  is assumed to be $C^1$-smooth with $p'(x)$ positive and bounded, the delay satisfies $\tau<\omega$ and the additional condition $p( y_\la^*(t))>d(t)e^{\int^{t+\tau}_t d(r)\, dr}$  for all $ t\in [0,\omega]$ and all $\la\in (0,1]$ is imposed, where $y_\la^*(t)$ is the positive $\omega$-periodic solution of the ODEs $y'=\la d(t)\big (S_0(t)-y\big)$, for  $\la\in (0,1]$.}
\end{rmk}

In the case of a constant initial concentration $S_0(t)\equiv S_0$,  the positive  periodic solution of \eqref{y_ode} is $y^*(t)\equiv S_0$, as in the following corollary.

% in which case one may consider the equivalent form \eqref{Chem1} for $u(t)=S_0-S(t)$. 
\begin{cor}\label{cor3.2}Consider the chemostat system \eqref{Chem0} with $S_0>0$ and  $d(t)\in C_\omega(\R)$ and positive. Assume (H2) and that  the following condition is satisfied:
\begin{equation}\label{H4}
\dps p_i(S_0)\, \min_{t\in[0,\omega]}\int_t^{t+\omega}e^{\int _t^{s-\tau_i} d(r)\, dr} \, ds> e^{\int_0^\omega d(r)\, dr}-1\q {\rm for}\q i=1,\dots,n.
%\item[(H3)]  $\dps p_i(S_0)>\max_{t\in [0,\omega]} \bigg (d(t)e^{\int_{t-\tau_i}^t d(r)\, dr}\bigg)$ for $i=1,\dots,n$.
\end{equation}
Then there exists (at least) $n$  nonnegative $\omega$-periodic solution $(S_i^*(t),0,\dots, x_i^*(t), 0,\dots, 0)$ $(1\le i\le n)$ with $0<S^*(t)<S_0$ and $x_i^*(t)>0,\ t\in\R$,
 for some $i\in \{1,\dots,n\}$. In particular this is the case if \begin{equation}\label{h4.0}\dps p_i(S_0)>\max_{t\in [0,\omega]} \bigg (d(t)e^{\int_{t-\tau_i}^t d(r)\, dr}\bigg)\q {\rm for}\q  i=1,\dots,n\end{equation}
 or \begin{equation}\label{h4.1} p_i(S_0)\ge\omega^{-1} \Big (e^{\int_0^\omega d(r)\, dr}-1\Big) \max_{t\in [0,\omega]}e^{\int_{t}^{t+\tau_i} d(r)\, dr}\q {\rm for}\q  i=1,\dots,n.\end{equation}\end{cor}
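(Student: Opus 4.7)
The plan is to reduce this corollary to the results already established (Theorem 3.3, Theorem 3.4 and Corollary 3.1), since system \eqref{Chem0} is just the special case of \eqref{Chem} with $S_0(t)\equiv S_0$. Under this constant-input hypothesis, the unique positive $\omega$-periodic solution of \eqref{y_ode} is the constant $y^*(t)\equiv S_0$, and consequently $p_i(y^*(t))=p_i(S_0)$ factors out of every integral appearing in (H3), (H3.a) and (H3.b).

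First I would show that \eqref{H4} is literally condition (H3) of Theorem \ref{thm3.3} rewritten. Pulling the constant $p_i(S_0)$ out of the integral in (H3) gives
$$p_i(S_0)\,\min_{t\in[0,\omega]}\int_{t}^{t+\omega}e^{\int_{t+\tau_i}^{s} d(r)\, dr}\, ds>e^{\int_0^\omega d(r)\, dr}-1.$$
To match this with \eqref{H4}, I would change variables $u=s-\tau_i$ in the inner integral and then shift the outer variable $t\mapsto t-\tau_i$, using the $\omega$-periodicity of $d$. The function $t\mapsto \int_t^{t+\omega}e^{\int_{t+\tau_i}^s d(r)\, dr}\,ds$ is $\omega$-periodic, so its minimum over $[0,\omega]$ coincides with the minimum of the shifted integrand, which is precisely $\int_t^{t+\omega} e^{\int_t^{s-\tau_i} d(r)\, dr}\, ds$. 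This proves that \eqref{H4} is equivalent to (H3) in this setting.

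Having established the equivalence, I would apply Corollary \ref{cor3.1}: for each $i\in\{1,\dots,n\}$, restrict to the planar subsystem \eqref{Chemi} containing only the $i$-th species and invoke Theorem \ref{thm3.3} to obtain an $\omega$-periodic solution of \eqref{Chem0} of the form $(S_i^*(t),0,\dots,x_i^*(t),0,\dots,0)$ with $0<S_i^*(t)<S_0$ and $x_i^*(t)>0$ on $\R$. This yields the $n$ asserted solutions.

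It remains to check that the stronger, easier conditions \eqref{h4.0} and \eqref{h4.1} each imply \eqref{H4} (equivalently, (H3)). For \eqref{h4.0}, substituting $t\mapsto t-\tau_i$ and using $\omega$-periodicity of $d$ converts it into condition (H3.a) with $y^*\equiv S_0$, so Theorem \ref{thm3.4} applies directly. For \eqref{h4.1}, since $\int_0^\omega p_i(y^*(t))\, dt=\omega p_i(S_0)$, the inequality is exactly condition (H3.b) with $y^*\equiv S_0$, and again Theorem \ref{thm3.4} applies. No real obstacle is expected: the only non-bookkeeping step is the variable-change/periodicity argument identifying the two minima, and I expect this to go through cleanly.
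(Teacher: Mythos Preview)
Your proposal is correct and follows exactly the approach the paper intends: the corollary is stated without proof precisely because, once one observes that $y^*(t)\equiv S_0$ for \eqref{Chem0}, condition \eqref{H4} is (H3), \eqref{h4.0} is (H3.a), and \eqref{h4.1} is (H3.b), so Theorem~\ref{thm3.3}, Theorem~\ref{thm3.4} and Corollary~\ref{cor3.1} apply directly. The periodicity/shift argument you use to identify the two minima is the same one already carried out in the proof of Theorem~\ref{thm3.3} (the identity $\min_t F_i(t)=\min_t F_i(t+\tau_i)$), so no new work is needed there.
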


\begin{cor}\label{cor3.4}  Consider the chemostat model
 \begin{equation}\label{Chem0_one}
 \begin{split}
 S'(t)&=d(t)(S_0-S(t))-\sum_{i=1}^n p(S(t))x(t)\\
x'(t)&=-d(t)x(t)+e^{-\int_{t-\tau}^td(s)\, ds}p(S(t-\tau))x(t-\tau),
\end{split}
\end{equation}
with $d(t)\in C_\omega ^+(\R)$ and positive, $S_0,\tau>0$ and $p_1(x):=p(x)$ satisfies (H2). If$$p(S_0)\min_{t\in[0,\omega]}\int_{t}^{t+\omega}e^{\int _{t+\tau}^{s} d(r)\, dr}\, ds>e^{\int_0^\omega d(r)\, dr}-1,$$
 there \eqref{Chem0_one} possesses a  positive $\omega$-periodic solution $(S^*(t),x^*(t))$. 
% In particular, this is the case if 
% \begin{equation*}\label{h3'}\dps p(S_0)>\max_{t\in [0,\omega]} \bigg (d(t)e^{\int_{t-\tau}^t d(r)\, dr}\bigg).\end{equation*}
 \end{cor}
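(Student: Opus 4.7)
The plan is to observe that Corollary \ref{cor3.4} is simply the specialisation of Corollary \ref{cor3.3} to the case of a constant input concentration $S_0(t)\equiv S_0$, so no new machinery is needed.

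First, I would verify that when $S_0(t)\equiv S_0$ is constant, the unique $\omega$-periodic solution of the auxiliary ODE \eqref{y_ode} is the constant function $y^*(t)\equiv S_0$. This is immediate from formulas \eqref{y_per}-\eqref{y_per2}, but it also follows without computation: $y(t)\equiv S_0$ solves $y'=d(t)(S_0-y)$, and from Lemma \ref{lem2.1} (or directly from the stability of the scalar linear ODE $y'=-d(t)y+d(t)S_0$ with $d(t)$ bounded below by $\ul d>0$) this is the only $\omega$-periodic solution.

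Next, I would plug $y^*(s)\equiv S_0$ into the hypothesis of Corollary \ref{cor3.3}. Since $p(y^*(s))=p(S_0)$ is a constant, it pulls out of the integral, and the hypothesis of Corollary \ref{cor3.3} becomes exactly
\[
p(S_0)\min_{t\in[0,\omega]}\int_{t}^{t+\omega}e^{\int _{t+\tau}^{s} d(r)\, dr}\, ds>e^{\int_0^\omega d(r)\, dr}-1,
\]
which is the hypothesis of Corollary \ref{cor3.4}. Corollary \ref{cor3.3} then guarantees the existence of a positive $\omega$-periodic solution $(S^*(t),x^*(t))$ of \eqref{Chem0_one}, with $0<S^*(t)<y^*(t)=S_0$ and $x^*(t)>0$ for all $t\in\R$, which is precisely the conclusion required.

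There is no real obstacle here: the entire content of the statement is captured by specialising $y^*$ in Corollary \ref{cor3.3}. The only point that deserves a line of justification in the write-up is the identification $y^*(t)\equiv S_0$ for constant input.
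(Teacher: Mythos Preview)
Your proposal is correct and matches the paper's implicit reasoning: the paper states Corollary \ref{cor3.4} without proof precisely because it is the immediate specialisation of Corollary \ref{cor3.3} (or equivalently of Corollary \ref{cor3.2} with $n=1$) once one notes, as the paper does just before Corollary \ref{cor3.2}, that $y^*(t)\equiv S_0$ when $S_0(t)\equiv S_0$.
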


 \section{Competitive exclusion, coexistence, attractivity}
\setcounter{equation}{0}

 In Corollary \ref{cor3.1}, criteria for the existence of $n$ different  $\omega$-periodic  solutions of \eqref{Chem}, of the forms
 $X_1^*(t)=(S_1^*(t),x_1^*(t),0,\dots, 0),\dots, X_n^*(t)=(S_n^*(t),0,\dots, 0,x_n^*(t)) $ with $(S_i^*(t),x_i^*(t))>0\, (1\le i\le n)$, were provided. In this situation, it would be relevant to have sufficient conditions for either   the attractivity of one of such solutions, say $(S_i^*(t),0,\dots, x_i^*(t),0,\dots, 0)$, leading to the  survival in a sustained periodic way of species $i$ and the extinction of all the others, or for the coexistence of all or part of the competitive species.

 The first  goal is to address the global attractivity of a periodic solution when
 competitive exclusion occurs and only the  fittest species, say species $i$,  survives. For simplicity of writing, we fix $i=1$.
 
% For \eqref{Chem}  with $S_0(t),d(t)$ positive $\omega$-periodic functions and under (H2), 
% %$S_0(t),d(t)$ positive $\omega$-periodic functions, and $p_i(x)$ satisfying   (H2).
%suppose e.g. that (H3)(a) is satisfied for $i=1$ and \eqref{exti} holds for $i=2, \dots, n$: 
% \begin{equation}\label{survival1}
% \begin{split}
% p_1( y^*(t))&>d(t+\tau_1)e^{\int_t^{t+\tau_1} d(r)\, dr},\q  t\in [0,\omega],\\
%p_i\big( y^*(t)\big)&<d(t+\tau_2)e^{\int^{t+\tau_2}_t d(r)\, dr},\q  t\in [0,\omega], i=2,\dots,n.
%\end{split}
%\end{equation}
%From
% Theorem \ref{thm3.1} the competing $i$-species with $i=2, \dots,n,$ will not survive, since   $x_i(t)\to 0$ as $t\to\infty$  for all  solutions $(S(t),x_1(t),x_2(t),\dots,x_n(t))$ with initial conditions in  $C_0^+$.
%

\begin{thm}\label{thm4.1} For system \eqref{Chem}, assume (H1)-(H2). If
 \begin{equation}\label{survival1}
 \begin{split}
 p_1( y^*(t))&>d(t+\tau_1)e^{\int_t^{t+\tau_1} d(r)\, dr},\q  t\in [0,\omega],\\
p_j\big( y^*(t)\big)&<d(t+\tau_j)e^{\int^{t+\tau_j}_t d(r)\, dr},\q  t\in [0,\omega], j=2,\dots,n,
\end{split}
\end{equation}
and $p_1(x)$ is strictly increasing, 
then \eqref{Chem} admits a  nontrivial $\omega$-periodic solution of the form $(S^*(t),x_1^*(t),0,\dots,0)$, with $0<S^*(t)<y^*(t),x_1^*(t)>0$ for $t\in[0,\omega]$, which is a global attractor of all solutions with initial conditions in $C_0^+$. \end{thm}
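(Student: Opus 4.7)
My plan is to split the statement into three parts: (i) existence of the semi-trivial periodic solution $(S^*(t), x_1^*(t), 0,\dots,0)$; (ii) extinction of the species $j\ge 2$ along every solution starting in $C_0^+$; and (iii) global convergence of the first two coordinates to $(S^*(t), x_1^*(t))$.

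For (i), I would simply invoke Corollary \ref{cor3.1} applied to the planar subsystem obtained by setting $x_j\equiv 0$ for $j\ge 2$ in \eqref{Chem}, since the first inequality in \eqref{survival1} is precisely hypothesis (H3.a) of Theorem \ref{thm3.4} specialised to $i=1$; this yields a solution of the stated sign pattern with $0<S^*(t)<y^*(t)$ and $x_1^*(t)>0$. For (ii), the second batch of inequalities in \eqref{survival1} is exactly the reversed (H3.a) condition discussed in Remark \ref{rmk3.1}, and hence coincides with criterion \eqref{exti} of \cite{Faria23}: this gives $\lim_{t\to\infty} x_j(t)=0$ for $j=2,\dots,n$ for every solution starting in $C_0^+$.

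The core task is (iii). My idea is to invoke the conservation principle (Lemma \ref{lem2.1}) together with the extinction in (ii): for any solution with initial data in $C_0^+$, the auxiliary function $y(t)$ defined in \eqref{y} satisfies $y(t)-y^*(t)\to 0$, while $x_j(t+\tau_j)\to 0$ for $j\ge 2$; substituting into \eqref{S(t)} shows that
\[
S(t)=y^*(t)-e^{\int_t^{t+\tau_1}d(s)\,ds}\,x_1(t+\tau_1)+\varepsilon(t),\qquad \varepsilon(t)\to 0,
\]
so the dynamics of $x_1$ is asymptotically governed by the scalar periodic DDE of mixed-type
\[
x_1'(t)=-d(t)x_1(t)+e^{-\int_{t-\tau_1}^t d(s)\,ds}\,p_1\!\left(y^*(t-\tau_1)-e^{\int_{t-\tau_1}^{t}d(s)\,ds}x_1(t)\right)x_1(t-\tau_1).
\]
Since $p_1$ is strictly increasing, this limiting scalar equation satisfies Smith's quasi-monotone condition (Q), and $x_1^*(t)$ is a positive $\omega$-periodic solution of it. I would then argue global convergence $x_1(t)-x_1^*(t)\to 0$ by a fluctuation/squeeze argument: show that any other positive $\omega$-periodic solution must equal $x_1^*$ by the usual trick of picking extrema of the quotient with $x_1^*$ and using strict monotonicity of $p_1$ to force equality, and then upgrade this to attractivity via the comparison theorem from \cite{Smith} (applied as in Remark \ref{rmk2.2}) by sandwiching a given solution between two solutions starting from ordered initial data, one above and one below $x_1^*$, both of which converge to $x_1^*$ by monotone convergence along time-$\omega$ translates of the Poincar\'e map. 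Finally, once $x_1(t)\to x_1^*(t)$ is established, $S(t)\to S^*(t)$ follows by combining with $y(t)\to y^*(t)$ through \eqref{S(t)}.

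The main obstacle is this last step: rigorously passing from the perturbed (non-autonomous, non-periodic) system to the limiting periodic scalar DDE and then exploiting quasi-monotonicity plus the strict monotonicity of $p_1$ to conclude uniqueness of the positive $\omega$-periodic solution of the limiting equation and its global attractivity. The monotone semiflow theory for the Poincar\'e operator on the ordered cone $\mathfrak{C}^+$, together with strong sublinearity inherited from the chemostat structure, should be the key tool; I would expect the argument to require some care to handle the mixed-type (delay/advance) nature of the equation, but the fact that the advanced terms disappear asymptotically (since they involve $x_j$ for $j\ge 2$) makes the limiting equation a genuine scalar DDE, for which the classical monotone theory of \cite{Smith} applies directly.
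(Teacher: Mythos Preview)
Your steps (i) and (ii) match the paper exactly: existence of the semi-trivial periodic solution via Theorem \ref{thm3.4}/Corollary \ref{cor3.1}, and extinction of $x_j$ for $j\ge 2$ together with persistence of $x_1$ via \cite[Theorem 3.3]{Faria23}.

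For (iii), however, the paper takes a much more direct route than what you outline. Rather than first isolating a limiting periodic scalar DDE, proving uniqueness of its positive periodic solution, invoking monotone Poincar\'e-map theory for attractivity, and only then handling the asymptotically periodic perturbation, the paper works directly with the \emph{perturbed} equation. It sets $z_1(t)=x_1(t)/x_1^*(t)-1$, writes down the resulting equation for $z_1'$ (which carries an extra term $h_1(t)=\sum_{j\ge 2} e^{\int_{t-\tau_1}^{t+\tau_j-\tau_1}d}\,x_j(t+\tau_j-\tau_1)\to 0$), and applies the fluctuation lemma: along a sequence $t_k\to\infty$ where $z_1(t_k)\to U:=\max\{\limsup z_1,-\liminf z_1\}$ and $z_1'(t_k)\to 0$, all coefficients are bounded and $h_1(t_k)\to 0$, so passing to a subsequence and using the strict monotonicity of $p_1$ yields an immediate contradiction unless $U=0$. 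This single argument delivers uniqueness and global attractivity simultaneously, with the perturbation absorbed trivially in the limit.

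Your approach is not wrong in principle, and the ingredients you name (quasi-monotonicity of the limiting equation, strict sublinearity from the chemostat structure, monotone convergence of Poincar\'e iterates) are all present. But the step you yourself flag as ``the main obstacle'' --- transferring attractivity from the limiting periodic DDE back to the asymptotically periodic perturbed one --- would require invoking or reproving a nontrivial asymptotically-periodic semiflow result, whereas the paper's fluctuation argument on $z_1$ bypasses this entirely in a few lines. If you want to keep your outline, the cleanest fix is simply to apply the fluctuation lemma directly to the ratio $x_1/x_1^*$ on the full system, exactly as the paper does; the quotient trick you mention for uniqueness is in fact the whole proof.
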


 \begin{proof} From Theorem \ref{thm3.4},  \eqref{Chem} admits a nontrivial $\omega$-periodic solution of the form $$X_1^*(t)=(S^*(t),x_1^*(t),0,\dots,0),$$ with $0<S^*(t)<y^*(t),x_1^*(t)>0$ for $t\in[0,\omega]$. 
 
 Fix any  solution $(S(t),x_1(t),x_2(t),\dots,x_n(t))$   of  \eqref{Chem} with initial conditions in $C_0^+$.  Since \eqref{survival1} holds, Theorem 3.3 in\cite{Faria23} implies that  $\liminf_{t\to \infty} x_1(t)>0$ and $x_i(t)\to 0$ as $t\to\infty$ for  $i=2, \dots,n$.
% The competing $i$-species with $i=2, \dots,n,$ will not survive, since 
% Theorem \ref{thm3.1} leads to  $x_i(t)\to 0$ as $t\to\infty$ for  $i=2, \dots,n$.
%On the other hand, Lemma \ref{thm3.2} yields  $\liminf_{t\to \infty} x_1(t)>0$. 

 Denote $y^*(t)-e^{\int_t^{t+\tau_1} d(r)\, dr}x^*_1(t+\tau_1)=:y_1^*(t)$. 
 Effecting the change of variables $z_1(t)=\frac{x_1(t)}{x_1^*(t)}-1$,
 %, z_i(t)=x_i(t)\, (2\le i\le n)$, 
the first equation in \eqref{Chem_y} is transformed into
 \begin{equation}\label{z1}
 \begin{split}
 z_1'(t)
 =e^{-\int_{t-\tau_1}^t d(r)\, dr}&
 \bigg [ p_1\bigg (y_1^*(t-\tau_1)-e^{\int_{t-\tau_1}^t d(r)\, dr}x_1^*(t)z_1(t)- h_1(t)\bigg)(1+z_1(t-\tau_1))\\
 &- p_1 \big (y_1^*(t-\tau_1)\big )(1+z_1(t))\bigg ] \frac{x_1^*(t-\tau_1)}{x^*(t)},
 \end{split}
\end{equation}
where $h_1(t):=\sum_{i=2}^ne^{\int_{t-\tau_1}^{t+\tau_i-\tau_1} d(s)\, ds}x_i(t+\tau_i-\tau_1)$. Define
$$  -v=\liminf_{t\to\infty} z_1(t),\q u=\limsup_{t\to\infty} z_1(t).$$
From the permanence of $x_1(t)$,  $-1<-v\le u<\infty$. It suffices to show that $U:=\max \{u,v\}=0$. 

For the sake of contradiction, suppose that $U>0$. We only treat the case $U=u$  (the situation $U=v$ is similar). From the fluctuation lemma, there exists $t_k\to\infty$ with $z_1(t_k)\to U$ and $z_1'(t_k)\to 0$, as $k\to\infty$. Clearly $h_1(t_k)\to 0$ and, for some subsequence still denoted by $(t_k)$, we have
$$e^{\int_{t_k-\tau_1}^{t_k} d(r)\, dr}\to \hat e,\  y_1^*(t_k-\tau_1)\to \hat  y_\tau,\ x_1^*(t_k)\to \hat x^*, \ x_1^*(t_k-\tau_1)\to \hat x_\tau^*, \ z_1(t_k-\tau_1)\to \hat z_\tau.
$$
Using \eqref{z1} and taking limits lead to
\begin{equation*}
 \begin{split}
 0
 &=\hat e^{-1}
 \Big [ p_1\big (\hat y_\tau-\hat e \hat x^* U\big)(1+\hat z_\tau)
 - p_1 \big (\hat y_\tau)\big )(1+U)\Big ] \frac{\hat x_\tau^*}{\hat x^*}\\
 &\le  \hat e^{-1}(1+U)\Big [ p_1\big (\hat y_\tau-\hat e \hat x^* U\big)
 - p_1 \big (\hat y_\tau)\big )\Big ]\frac{\hat x_\tau^*}{\hat x^*}<0,
 \end{split}
\end{equation*}
which is a contradiction. Thus $U=0$ and the proof is complete.
 \end{proof}

 The question of coexistence of all the competing  species growing in the chemostat vessel  is far more challenging.  
A  first criterion asserting that there exists a positive $\omega$-periodic solution $x^*(t)$ is provided below. For simplicity and without loss of generality, since the order of the species may be exchanged, we state a result by setting  in the hypotheses an a priori ordering  for the species: $x_1,x_2,\dots,x_n$.

 \begin{thm}\label{thm4.2} Consider system \eqref{Chem} and assume (H1)-(H2). In addition, suppose that
 \begin{equation}\label{positive1a}
 \begin{split}
 %p_1( y^*(t))&>d(t+\tau_1)e^{\int_t^{t+\tau_1} d(r)\, dr},\q  t\in [0,\omega],\\
p_i\big( y_{i-1}^*(t)\big)&>d(t+\tau_i)e^{\int^{t+\tau_i}_t d(r)\, dr},\q  t\in [0,\omega], i=1,2,\dots,n,
\end{split}
\end{equation}
where:\\
(i) $y_0^*(t):=y^*(t)$ and $ x^{**}_{1}(t):=x^{*}_1(t)$;\\
(ii) $y_{i}^*$ are defined as\begin{equation*} 
y_{i}^*(t):=y_{i-1}^*(t)-e^{\int_t^{t+\tau_{i}} d(r)\, dr}x^{**}_{i}(t+\tau_i),\q i= 1,\dots, n-1;
\end{equation*}
(iii)  $x_i^{**}(t)$ is a positive $\omega$-periodic solution of
 \begin{equation}\label{Chem_xi}
x_i'(t)=-d(t)x_i(t)+e^{-\int_{t-\tau_i}^td(s)\, ds}p_i\big(y_{i-1}^*(t-\tau_i)-e^{\int_{t-\tau_i}^td(s)\, ds}x_i(t)\big)x_i(t-\tau_i),\ \ i=1,\dots, n.
\end{equation} 
Then \eqref{Chem} admits a positive $\omega$-periodic solution. \end{thm}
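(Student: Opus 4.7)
Plan. The approach is a two-stage argument: first, inductively build the cascading positive periodic functions $y_i^*$ and $x_i^{**}$ named in the statement; second, upgrade to a coexistence solution of the full system via a restricted Krasnoselskii fixed-point argument.

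\emph{Stage 1 (inductive construction).} I would verify by induction on $i \in \{1,\dots,n\}$ that $y_{i-1}^*(t)$ is strictly positive and $\omega$-periodic, and that \eqref{Chem_xi} admits a positive $\omega$-periodic solution $x_i^{**}(t)$. For $i=1$, the scalar equation \eqref{Chem_xi} is precisely the single-species reformulation, via the conservation principle of Lemma \ref{lem2.1}, of a chemostat whose conservation variable has unique $\omega$-periodic solution $y_0^* = y^*$; hypothesis \eqref{positive1a} at $i=1$ coincides with (H3.a) in this setting, so Corollary \ref{cor3.3} yields $x_1^{**} > 0$. Setting $y_1^*(t) := y^*(t) - e^{\int_t^{t+\tau_1}d(r)\,dr} x_1^{**}(t+\tau_1)$, Lemma \ref{lem2.1} identifies $y_1^*$ with the substrate coordinate of this positive single-species periodic solution, so $y_1^* > 0$. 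The inductive step is structurally identical: given $y_{i-1}^* > 0$, regard \eqref{Chem_xi} as a single-species chemostat whose conservation trajectory is $y_{i-1}^*$, apply Corollary \ref{cor3.3} using \eqref{positive1a} at $i$, and conclude $x_i^{**} > 0$ and $y_i^* > 0$.

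\emph{Stage 2 (full coexistence solution).} To produce a positive $\omega$-periodic solution of \eqref{Chem}, I would reapply the Krasnoselskii compressive fixed-point theorem (Theorem \ref{thmKras}) to the operator $\Phi$ from \eqref{Phi}, but on the smaller sub-cone
\[
K^{**} := \{x \in K(\sigma) : x_i(t) \ge \eta\, x_i^{**}(t),\ t\in\mathbb{R},\ i=1,\dots,n\},
\]
for a sufficiently small $\eta > 0$. The outer boundary condition ($\Phi x \ne \lambda x$ for $\|x\| = R$ large) is inherited verbatim from Theorem \ref{thm3.3} via dissipativeness, since $K^{**} \subset K(\sigma)$. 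The inner condition of Theorem \ref{thmKras} is replaced by the positivity lower bound built into $K^{**}$: any fixed point in $K^{**}$ automatically satisfies $x_i^*(t) \ge \eta\, x_i^{**}(t) > 0$ for every $i$, and via \eqref{S(t)} gives a strictly positive $\omega$-periodic solution $(S^*,x_1^*,\dots,x_n^*)$ of \eqref{Chem}.

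\emph{Main obstacle.} The delicate point is verifying the invariance $\Phi(K^{**}) \subset K^{**}$. Because $\Phi_i x$ depends on every coordinate of $x$ through the nonlinearity $p_i^*(\cdot, x_\cdot)$, and because increasing the $x_j$ with $j \ne i$ strictly decreases the argument of $p_i$ (hence decreases $\Phi_i x$), the coordinate-wise lower bound $x_i \ge \eta\, x_i^{**}$ does not propagate automatically through the operator. The propagation must be extracted from the strict slack encoded in \eqref{positive1a}: concretely, one has to prove an estimate of the form
\[
(\Phi_i x)(t) \ge \bigl(1 - C_i\,\eta\bigr)\, x_i^{**}(t), \qquad t\in\mathbb{R},\ i=1,\dots,n,
\]
uniformly on $K^{**}$, where $C_i$ quantifies the sensitivity of $p_i$ to the dissipatively bounded contributions of the other species. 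Uniform continuity of each $p_i$ on $[0,\overline{S_0}]$, together with the strict inequalities $p_i(y_{i-1}^*(t)) > d(t+\tau_i)\,e^{\int_t^{t+\tau_i} d(r)\,dr}$ from \eqref{positive1a}, then allows $\eta > 0$ to be chosen small enough that the right-hand side exceeds $\eta\, x_i^{**}(t)$, closing the invariance argument and completing the proof.
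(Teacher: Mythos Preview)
Your Stage 1 coincides with the paper's argument: the paper likewise applies the single-species existence result (Theorem \ref{thm3.4} / Corollary \ref{cor3.3}) inductively to the scalar equations \eqref{Chem_xi}, using \eqref{positive1a} at step $i$ to produce $x_i^{**}>0$ and then setting $y_i^*(t)=y_{i-1}^*(t)-e^{\int_t^{t+\tau_i}d(r)\,dr}\,x_i^{**}(t+\tau_i)$.

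The divergence is at Stage 2. The paper has no second fixed-point step. After the inductive construction it directly asserts that the tuple built from the cascade --- for $n=2$, the function $X_2^{**}(t)=(S_2^{**}(t),x_1^*(t),x_2^{**}(t),0,\dots,0)$ with $S_2^{**}(t)=y_1^*(t)-e^{\int_t^{t+\tau_2}d(r)\,dr}x_2^{**}(t+\tau_2)$ --- is itself a positive $\omega$-periodic solution of \eqref{Chem}, and that ``this argument may be pursued in an iterative way for any $n>2$''. In the paper's scheme Stage 1 already delivers the coexistence solution; nothing like your sub-cone $K^{**}$ or a second Krasnoselskii application appears.

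Your Stage 2, besides being absent from the paper, carries a genuine gap that you do not close. First, $K^{**}=\{x\in K(\sigma):x_i\ge\eta\,x_i^{**}\}$ is not a cone: it excludes $0$ and is not stable under scalar multiplication, so Theorem \ref{thmKras} does not apply to it, and the sentence ``the inner condition is replaced by the positivity lower bound built into $K^{**}$'' does not correspond to any form of Krasnoselskii's theorem. Second, the invariance $\Phi(K^{**})\subset K^{**}$ fails outright: for $x\in K^{**}$ with $\|x\|$ large, the very computation you invoke from Theorem \ref{thm3.3} makes the arguments of the $p_i$ nonpositive, so $\Phi_ix\equiv 0$, violating $\Phi_ix\ge\eta\,x_i^{**}$. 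Your proposed uniform estimate $(\Phi_ix)(t)\ge(1-C_i\eta)\,x_i^{**}(t)$ therefore cannot hold on $K^{**}$: the coordinates $x_j$ with $j\ne i$ are only dissipatively bounded by a constant of order $\overline{S_0}$, not by anything proportional to $\eta$, so their contribution to the argument of $p_i$ is not $O(\eta)$ and the ``strict slack'' in \eqref{positive1a} cannot absorb it.
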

 
 \begin{proof} 
 
 For  \eqref{Chem}, or alternatively \eqref{Chem_y},  \eqref{positive1a} asserts that (H3.a) is satisfied for $i=1$:
$$ p_1( y^*(t))>d(t+\tau_1)e^{\int_t^{t+\tau_1} d(r)\, dr}, \q  t\in [0,\omega].$$
%$S_0(t),d(t)$ positive $\omega$-periodic functions, and $p_1(x),p_2(x)$ satisfying   (H2). In addition, assume that (H3)(a) is satisfied for $i=1$ and \eqref{exti} holds for $i=2$:
% \begin{equation}\label{n=2}
% \begin{split}
% p_1( y^*(t))&>d(t+\tau_1)e^{\int_t^{t+\tau_1} d(r)\, dr},\q  t\in [0,\omega],\\
%p_2\big( y^*(t)\big)&<d(t+\tau_2)e^{\int^{t+\tau_2}_t d(r)\, dr},\q  t\in [0,\omega].
%\end{split}
%\end{equation}
%In this case, the competing species with $i=2$ will not survive, since from
% Theorem \ref{thm3.3}  $x_2(t)\to 0$ as $t\to\infty$ for all  solutions $(S(t),x_1(t),x_2(t))$ with initial conditions in  $C_0^+$.
Consider  a periodic solution $X_1^*(t)=(S_1^*(t),x_1^*(t),0,\dots,0)$ with $x_1^*(t)>0$, whose existence is established in Theorem \ref{thm3.4} and Corollary \ref{cor3.1}. Denote $y^*(t)-e^{\int_t^{t+\tau_1} d(r)\, dr}x^*_1(t+\tau_1)=:y_1^*(t)$. From \eqref{positive1a}, we have
  \begin{equation}\label{n=2_coex}
 \begin{split}
p_2\big( y_1^*(t)\big)&>d(t+\tau_2)e^{\int^{t+\tau_2}_t d(r)\, dr},\q  t\in [0,\omega].
\end{split}
\end{equation}
  Theorem \ref{thm3.4}  now implies that there exists a positive $\omega$-periodic solution $x_2^{**}(t)$ of the equation
    \begin{equation}\label{x2}x_2'(t)=-d(t)x_2(t)+e^{-\int_{t-\tau_2}^t d(r)\, dr}p_2\big (y_1^*(t-\tau_2)-e^{\int_{t-\tau_2}^t d(r)\, dr}x_2(t)\big)x_2(t-\tau_2).\end{equation}
 Therefore,  $X_2^{**}(t)=(S_2^{**}(t),x_1^*(t),x_2^{**}(t),0,\dots,0)$,  where $S_2^{**}(t)=y_1^*(t)-e^{\int^{t+\tau_2}_t d(r)\, dr}x_2^{**}(t+\tau_2)$, is a  positive  $\omega$-periodic solution of \eqref{Chem}.  
This argument may be pursued in an iterative way for any $n>2$, thus we find a strictly positive $\omega$-periodic solution of \eqref{Chem}.\end{proof}
 
% ---
% 
% 
% [TIRAR: Since (H3.a) is satisfied for $i=1$, take
% a periodic solution $(S_1^*(t),x_1^*(t),0,\dots,0)$ with $x_1^*(t)>0$. For  $y_1^*(t)=y^*(t)-e^{\int_t^{t+\tau_1} d(r)\, dr}x^*_1(t+\tau_1)$, under \eqref{positive1a} and proceeding as above we find a solution  $X_2^{**}(t)=(S_2^{**}(t),x_1^*(t),x_2^{**}(t),0,\cdots,0)$ with the first three components positive. For
% $y_2^*(t)=y_1^*(t)-e^{\int^{t+\tau_2}_t d(r)\, dr}x_2^{**}(t+\tau_2)$, from \eqref{positive1a} we have $p_3\big( y_2^*(t)\big)>d(t+\tau_3)e^{\int^{t+\tau_3}_t d(r)\, dr},\  t\in [0,\omega]$, so that we find a solution
% $X_3^{**}(t)=(S_3^{**}(t),x_1^*(t),x_2^{**}(t),x_3^{**}(t),0,\cdots,0)$ with $S_3^{**}(t), x_i^{**}(t)>0, i=1,2,3.$
%  Iterating the procedure, we find a strictly positive $\omega$-periodic solution of \eqref{Chem}.]
  
  \begin{rmk} {\rm With the above notations, this criterion is still valid with conditions \eqref{positive1a} replaced by
   \begin{equation}\label{positive1c}
\int_0^\omega p_i( y_{i-1}^*(t))\, dt \ge \Big (e^{\int_0^\omega d(r)\, dr}-1\Big) \max_{t\in [0,\omega]}e^{\int_{t}^{t+\tau_i} d(r)\, dr}, \q  i=2,\dots,n.
\end{equation}
 For related results, see  \cite[Theorem 3.1]{WZ} for an ODE chemostat model with $n$ species in competition and no delays, where it was  asserted  that a strictly positive periodic solution must exist under  conditions similar to \eqref{positive1c}. %[Drop? This seems to be repeated...]
 }
  \end{rmk} 
  
Rather   then \eqref{positive1a}  or \eqref{positive1c}, it would be relevant to find more natural and easier verifiable conditions for the co-existence of all or part of the microorganisms. Note that, even under (H1)-(H2) and the uniform persistence of \eqref{Chem} (in $C_0^+$) -- as mentioned, a criterion was given  in \cite{Faria23} --, it is not obvious that the uniform persistence of all components of solutions with initial conditions in $C_0^+$ guarantees the existence of an $\omega$-periodic solution $x^*(t)$  whose  components are all  positive.  We shall prove that this is in fact true by using a fixed point theorem in \cite{Zhao}. As a first step, a preliminary result establishes a ``box" where to look for such a positive $\omega$-periodic solution.

  \begin{thm}\label{thm4.3} For system \eqref{Chem} with $S_0,d, p_i,\tau_i\, (1\le i\le n)$  satisfying (H1)-(H3), consider $n$  nonnegative $\omega$-periodic solutions of the form $X_i^*(t):=(S_i^*(t),0,\dots, 0,x_i^*(t), 0,\dots, 0)$ with $0<S_i^*(t)<y^*(t)$ and $x_i^*(t)>0$ ($1\le i\le n$) as in Corollary \ref{cor3.1}. Then the ordered set 
  $$\prod_{i=1}^n [0,x_i^*]=\{\var=(\var_1,\dots,\var_n)\in \mathfrak{C}:0\le \var_i\le x_i^*, i=1,\dots,n\}$$
  is positively invariant for \eqref{Chem_y}. 
%  Consequently, for any solution $X(t)=(S(t),x(t))$ of \eqref{Chem} with initial conditions $X_0\in \big [0, \ol{y^*}\big ]\times\prod_{i=1}^n [0,x_i^*]$ there exists a periodic solution $p(t)\ge 0$ of \eqref{Chem} such that $\lim_{t\to\infty} |X(t)-p(t)|=0.$
  \end{thm}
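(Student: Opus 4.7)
My plan is to handle the lower and upper bounds separately. The lower bound $x_i(t)\ge 0$ is granted for $t\ge 0$ by the positive invariance of $\mathfrak{C}^+$ for \eqref{Chem_y} recalled in Section~2. The heart of the argument is the upper bound $x_i(t)\le x_i^*(t)$, which I will establish componentwise by reducing the mixed-type system \eqref{Chem_y} to a scalar retarded DDE that admits $x_i^*$ as a solution, and then invoking quasi-monotone comparison.

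Fix $i\in\{1,\dots,n\}$ and let $x(t)=(x_1(t),\dots,x_n(t))$ be a solution of \eqref{Chem_y} with initial data $\varphi$ in the set $\prod_{j=1}^n[0,x_j^*]$. Because $x_j(t)\ge 0$ for every $j$ and $t\ge -\tau$, dropping the summands with $j\neq i$ from the argument of $p_i$ in \eqref{f_i} can only enlarge that argument, and since $p_i$ is non-decreasing I obtain
\[
 f_i(t,x_t)\le e^{-\int_{t-\tau_i}^t d(s)\,ds}\,p_i\!\Big(y^*(t-\tau_i)-e^{\int_{t-\tau_i}^t d(s)\,ds}\,x_i(t)\Big)\,x_i(t-\tau_i)=:G_i\bigl(t,(x_i)_t\bigr).
\]
Crucially, $G_i(t,u_t)$ depends only on $u(t)$ and $u(t-\tau_i)$: the mixed-type dependence on future values $x_j(t+\tau_j-\tau_i)$ with $j\neq i$ has been absorbed into this estimate. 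Moreover, the conservation principle applied to the periodic solution $X_i^*=(S_i^*,0,\dots,x_i^*,\dots,0)$, in which only species $i$ is present, gives $S_i^*(t-\tau_i)=y^*(t-\tau_i)-e^{\int_{t-\tau_i}^t d(s)\,ds}\,x_i^*(t)$, so that $x_i^*$ is in fact an $\omega$-periodic solution (not merely a subsolution) of the scalar retarded DDE
\[
 u'(t)=-d(t)\,u(t)+G_i(t,u_t).
\]

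With this reduction in hand, what remains is a standard scalar quasi-monotone comparison. The right-hand side above, viewed as a functional on $C([-\tau_i,0],\R)$, satisfies Smith's condition (Q): if $\phi\le\psi$ with $\phi(0)=\psi(0)$, then the non-delayed terms and the $p_i$-factor coincide, while the non-negative coefficient $e^{-\int_{t-\tau_i}^t d(s)\,ds}\,p_i(\cdot)$ multiplying $\phi(-\tau_i)\le\psi(-\tau_i)$ produces the required monotonicity. Since $\varphi_i(\theta)\le x_i^*(\theta)$ on $[-\tau_i,0]\subset[-\tau,\tau^0]$, applying \cite[Theorem 5.1.1]{Smith} (valid in this setting by Remark~\ref{rmk2.2}) to the subsolution $x_i$ and the solution $x_i^*$ of the scalar DDE yields $x_i(t)\le x_i^*(t)$ for all $t\ge 0$. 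Repeating this for every $i=1,\dots,n$ gives the claimed positive invariance.

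The main (if mild) obstacle is the mixed-type structure of \eqref{Chem_y}: standard DDE comparison theorems require purely retarded functionals, but $f_i(t,x_t)$ contains the future-valued terms $x_j(t+\tau_j-\tau_i)$ when $\tau_j>\tau_i$. The resolution is precisely the first step, where these non-negative contributions are simply dropped to majorise $f_i$ by a retarded functional of $x_i$ alone; after that step the proof becomes a textbook application of quasi-monotone comparison, and no further delicate handling of the advanced arguments is needed.
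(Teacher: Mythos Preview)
Your proof is correct and follows essentially the same approach as the paper's. The paper majorises \eqref{Chem_y} by the decoupled system \eqref{4.6} (obtained exactly as you do, by dropping the nonnegative terms $x_j(t+\tau_j-\tau_i)$, $j\ne i$, from the argument of $p_i$), observes that $(x_1^*,\dots,x_n^*)$ is a solution of \eqref{4.6}, verifies (Q), and invokes \cite[Theorem 5.1.1]{Smith}; you carry out the identical argument componentwise as $n$ scalar comparisons, which is equivalent since \eqref{4.6} is already decoupled. One small remark: once you have reduced to the scalar \emph{retarded} DDE $u'=-d(t)u+G_i(t,u_t)$, the appeal to Remark~\ref{rmk2.2} is unnecessary, as the standard DDE comparison theorem applies directly.
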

  
  \begin{proof}   Solutions $x(t)\ge 0$ of \eqref{Chem_y}  satisfy
   \begin{equation}\label{4.5}
 \begin{split}
 %y'(t)&=d(t)(S_0(t)-y(t))\\
x_i'(t)&\le -d(t)x_i(t)\\
&+e^{-\int_{t-\tau_i}^td(s)\, ds}p_i\Big(y^*(t-\tau_i)- e^{\int_{t-\tau_i}^{t} d(s)\, ds}x_i(t)\Big)x_i(t-\tau_i),\ i=1,\dots,n.\\
\end{split}
\end{equation}
Moreover the system of decoupled DDEs
 \begin{equation}\label{4.6}
 \begin{split}
 %y'(t)&=d(t)(S_0(t)-y(t))\\
u_i'(t)&= -d(t)u_i(t)\\
&+e^{-\int_{t-\tau_i}^td(s)\, ds}p_i\Big(y^*(t-\tau_i)- e^{\int_{t-\tau_i}^{t} d(s)\, ds}u_i(t)\Big)u_i(t-\tau_i),\ i=1,\dots,n,\\
\end{split}
\end{equation}
    satisfies the quasi-monotone condition (Q) in \cite[p.~78]{Smith}.  Clearly, $(x_1^*(t),\dots,x_n^*(t))$ is a solution of \eqref{4.6}. Although \eqref{Chem_y} is not in general a DDE, but a {\it mixed-type} FDE in the abstract space $\mathfrak{C}$, for solutions with $x_i(t)\le x_i^*(t)$ for all $t\in [-\tau,\tau^0], i=1,\dots,n,$ we can apply the proof of  \cite[Theorem 5.1.1]{Smith} and deduce
 that any solution $x(t)$ with initial condition $\var\in X:=\prod_{i=1}^n [0,x_i^*]$ satisfies $x_t\in X$ for all $t\ge 0$.  The invariance of the  set $\prod_{i=1}^n [0,x_i^*]$ is proven.

%
%The last assertion in the statement follows from the fact that the Poincar\'e map $\pi:X\to X, \var\mapsto x_\omega(\cdot,\var)$, satisfies $0\le P\var\le \var$ for all $\var \in X$, therefore $P^n\var$ converges to a fixed point $\var^*\in X$ of $\pi$. Therefore,  $x(\cdot; \var^*)$ is a periodic solution of \eqref{Chem_y} and  $\lim_{t\to\infty} |x(t;\var)-x(t; \var^*)|=0$. Using \eqref{y} and \eqref{y_ode}, we obtain the desirable result,
%with $p(t)=(S^*(t),x(t; \var^*))$ and
% $S^*(t)=y^*(t)-\sum_{i=1}^n e^{\int_t^{t+\tau_i} d(s)\, ds}x_i(t+\tau_i;\var^*)$.
   \end{proof}

   Observe that the Krasnoselskii theorem allows finding a fixed point $x^*(t)$ of the operator $\Phi$ in \eqref{Phi} in the set $K\cap \prod_{i=1}^n [0,x_i^*]$, however it does not provide the positivity of all components of $x^*(t)$.    
 For a result in the framework of ODE chemostat models with two species in competition, see \cite[Corollary 7.5.2]{SmithW}, however the situation is quite different for delayed chemostat models.
 In spite of this, 
 a criterion for  the existence of  a  strictly positive solution of \eqref{Chem_y} in $C_\omega\cap \prod_{i=1}^n [0,x_i^*]$ is now given.

     \begin{thm}\label{thm4.4} Consider system \eqref{Chem} with $S_0,d, p_i,\tau_i\, (1\le i\le n)$ satisfying (H1)-(H3). Assume also that  \eqref{Chem}  is uniformly persistent (in $C_0^+$). Then there exists a strictly positive $\omega$-periodic solution of  \eqref{Chem}.
      \end{thm}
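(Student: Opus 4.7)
The plan is to construct a Poincar\'e map for \eqref{Chem} on a bounded positively invariant set and apply a fixed-point theorem for compact, uniformly persistent discrete semidynamical systems, in the spirit of \cite{Zhao}.

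First I set up the framework. By Lemma \ref{lem2.1} and the dissipativity estimates of Section 2, the semiflow of \eqref{Chem} on $C^+$ admits a bounded, closed, convex, positively invariant set $X \subset C^+$ that contains the $\omega$-limit set of every positive initial datum; for instance, one may intersect the natural dissipativity box (from Lemma \ref{lem2.1}) with the constraint $0 \le \varphi_i \le x_i^*$ on $[-\tau, 0]$, where $x_i^*(t)$ comes from the $n$ semi-trivial $\omega$-periodic solutions $X_i^* = (S_i^*, 0, \dots, x_i^*, \dots, 0)$ furnished by Corollary \ref{cor3.1}. Invariance of this constraint under the $x$-dynamics is Theorem \ref{thm4.3}. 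The Poincar\'e operator $P : X \to X$, $P(\varphi) = X_\omega^\varphi$, is then well-defined and completely continuous: assuming WLOG $\omega > \tau$, the semiflow of \eqref{Chem} is smoothing after one period, which yields compactness in $C$.

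Next, identify the extinction boundary $X_\partial \subset X$ as the set of $\varphi \in X$ for which at least one species $i$ is absent along the entire future orbit, i.e.\ $x_i^\varphi(t) \equiv 0$ for some $i$; $X_\partial$ is then positively invariant under $P$ by construction. Its complement $X_0 := X \setminus X_\partial$ consists of initial data from which every species survives. The uniform persistence hypothesis in $C_0^+$ supplies $\eta > 0$ such that $\liminf_{t \to \infty} x_i^\varphi(t) \ge \eta$ for every $i$ and every $\varphi \in X_0$, which translates directly into uniform persistence of the discrete semidynamical system $\{P^k\}$ with respect to the pair $(X_0, X_\partial)$. Invoking the fixed-point theorem for compact, uniformly persistent maps from \cite{Zhao} then produces a fixed point $\varphi^* \in X_0$ of $P$. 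Its orbit under the semiflow is an $\omega$-periodic solution $(S^*(t), x_1^*(t), \dots, x_n^*(t))$ of \eqref{Chem} with $x_i^*(t) > 0$ for every $i$ and every $t$ (applying uniform persistence forward in time from $\varphi^*$); positivity of $S^*(t)$ then follows from the lower bound $\liminf_{t \to \infty} S(t) \ge m_0 > 0$ recalled in Section 2.

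The main obstacle is the verification of the full technical hypotheses of Zhao's fixed-point theorem, in particular the existence of a global attractor for $P|_{X_0}$ and an acyclic isolated covering of the maximal compact invariant set inside $X_\partial$. A natural such covering is provided by the washout periodic solution $(y^*, 0, \dots, 0)$ together with the $n$ semi-trivial periodic orbits $X_i^*$, and the absence of heteroclinic cycles among them can be extracted from the quasi-monotone structure of the reduced mixed-type system \eqref{Chem_y} together with the comparison arguments already exploited in the proof of Theorem \ref{thm2.1}.
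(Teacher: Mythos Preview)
Your overall strategy---Poincar\'e map plus the fixed-point theorem from \cite{Zhao}---is exactly what the paper does. There are two differences worth noting, one cosmetic and one substantive.

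The cosmetic one: the paper works not with \eqref{Chem} on $C^+$ but with the reduced mixed-type system \eqref{Chem_y} on $\mathfrak{C}$. This allows a very clean choice of invariant set, namely $X=\prod_{i=1}^n[0,x_i^*]$ (exactly Theorem~\ref{thm4.3}) and $X_0=\prod_{i=1}^n(0,x_i^*]$, so that $X_0$ is manifestly convex and relatively open in $X$, and $\partial X_0$ is just the set of $\varphi$ with $\varphi_i(t)=0$ for some $i$ and some $t$. Your $X_\partial$ (species identically zero along the whole future orbit) is a smaller set and does not obviously give $X_0$ open, which is one of the structural hypotheses actually used.

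The substantive point is that your ``main obstacle'' is illusory. The version of Zhao's result invoked here (\cite[Theorem~2.1]{Zhao}) does \emph{not} require an acyclic isolated covering of the boundary dynamics. Acyclicity arguments are the standard route to \emph{prove} uniform persistence; once persistence is \emph{assumed}, as it is in the statement of Theorem~\ref{thm4.4}, all that remains is (i) $X_0$ convex and relatively open in the complete metric space $X$, (ii) $P(X_0)\subset X_0$, (iii) $P$ compact, and (iv) the uniform persistence estimate $\liminf_{k\to\infty} d(P^k\varphi,\partial X_0)\ge\eta$. The paper checks (iii) directly by writing the variation-of-constants formula for $x_i(t+\omega)$ and applying Ascoli--Arzel\`a; no analysis of heteroclinic structure among the semi-trivial periodic orbits is needed. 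So your proof is essentially complete once you drop the last paragraph and instead verify compactness of $P$ and the convex/open structure of $X_0$ explicitly.
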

      
      \begin{proof} Consider $\var\in X:=\prod_{i=1}^n [0,x_i^*]$ as a subset of the Banach space $\mathfrak{C}$ with the supremum norm $\|\cdot\|_\infty$. With the induced metric, $X$ is complete. Consider also $X_0:=\prod_{i=1}^n (0,x_i^*]$ as a subset of $X$. Note that $X_0$ is convex,  relatively open in $X$ and its border $\p X_0$  in $X$ is the set of functions $ \var=(\var_1,\dots,\var_n)\in [0,x_i^*]$ such that $\var_i(t)=0$ for some $i\in\{1,\dots,n\}$ and some $t\in [-\tau,\tau^0]$.
      
      From Theorem \ref{thm4.3}, $X$ is positively  invariant for  \eqref{Chem_y}. Consider the Poincar\'e map
      $$P:X\to X,\q \var\mapsto x_\omega(\cdot,\var),$$
      where $x(t,\var)$ is the solution of  \eqref{Chem_y} with initial condition $x_0=\var$. Since solutions with initial condition in $C_0^+$ are  positive for $t\ge 0$, then $X_0$ is positively  invariant  as well, and in particular $P(X_0)\subset X_0$. From the assumptions, system \eqref{Chem} is uniformly persistent in $C_0^+$, therefore there exists $\eta>0$ such that
      $$\liminf_{n\to\infty}d(P^n(\var), \p X_0)\ge \eta\q {\rm for\ all}\q \var \in X_0.$$
      As $P(X)\subset X$ and $X$ is uniformly bounded from above, then
   $\|x_t(\var)\|\le C$ for all $\var\in X$ and $t\ge 0$, for $C=\max_{1\le i\le n}\|x_i^*\|$. 
      
      Next, we prove that $P$ is a compact operator. For $x(t)=x(t,\var)$ and $P=(P_1,\dots,P_n)$, we have
   \begin{equation}\label{Poinc1}
   \begin{split}(P_ix)(t)&=x_{i\omega}(t)=x_i(t+\omega)\\
   &=\var_i(0)e^{-\int_0^{t+\omega}d(r)\, dr}\\&+\int_0^{t+\omega}e^{-\int^{t+\omega}_{s-\tau_i}d(r)\, dr}p_i\Big( y^*(s-\tau_i)-\sum_{j=1}^n e^{\int_{s-\tau_i}^{s+\tau_j-\tau_i} d(r)\, dr}x_j(s+\tau_j-\tau_i)\Big) x_i(s-\tau_i)ds
   \end{split}
     \end{equation}
     for $t\in I:=[-\tau,\tau^0]$. Writing $D(t)=e^{\int_0^{t}d(r)\, dr}$, for $t_1,t_2\in I, t_1>t_2$ we derive
  \begin{equation}\label{Poinc2}
   \begin{split}|(P_ix)(t_1)-(P_ix)(t_2)|
   &\le Ce^{-D(\omega)}\big(e^{-D(t_1)}-e^{-D(t_2)}\big)\\&+Ce^{-D(\omega)}p_i\big(\ol{S_0}\big)\int_0^{t_2+\omega}\big(e^{-\int^{t_2}_{s-\tau_i}d(r)\, dr}-e^{-\int^{t_1}_{s-\tau_i}d(r)\, dr}\big)\, ds\\
   &+Ce^{-D(\omega)}p_i\big(\ol{S_0}\big)\int_{t_2+\omega}^{t_1+\omega}e^{-\int^{t_1}_{s-\tau_i}d(r)\, dr}\, ds.
   \end{split}
     \end{equation}
 From the uniform continuity and boundedness of $e^{-D(t)}$ on the compact interval $I$, it follows that $P(X)$ is equicontinuous.  From Ascoli-Arzel\`a Theorem, $P$ is a compact operator.
      
      In this way, all the assumptions of \cite[Theorem 2.1]{Zhao} are satisfied,  which allows us to conclude that  $P$ has a fixed point $\var^\#$ in $X_0$. Thus $$\Big( y^*(t)-\sum_{i=1}^n e^{\int_t^{t+\tau_i} d(s)\, ds}x_i^\#(t+\tau_i),x_1^\#(t),\dots,x_n^\#(t)\Big),$$ where $x(t,\var^\#):=x^\#(t)=(x_1^\#(t),\dots,x_n^\#(t))$, is a positive $\omega$-periodic solution of  \eqref{Chem}. \end{proof}
      
       \begin{rmk}\label{rmk4.2} {\rm Consider system \eqref{Chem}  under hypotheses (H1)-(H2). As shown in \cite{Faria23}, if \eqref{pers}  holds, where $m_0>0$ is a uniform lower bound for the coordinate $S(t)$ of solutions  of \eqref{Chem} with initial conditions in $C_0^+$, then the system is permanent. Moreover, \eqref{pers} implies (H3). From \cite{Faria23}, a rough estimate for $m_0$ is also provided. However, in general that estimate is not compatible with \eqref{pers}, and therefore is not useful for applying the above Theorem \ref{thm4.4}.  To clarify this assertion, in the next example the case of $d(t)$ constant is considered.}
      % Then there exists a strictly positive $\omega$-periodic solution of  \eqref{Chem}.
      \end{rmk}

%---
%      
%      [Write corollaries for $d(t)\equiv d>0$.?]
%
%   
%
% ---
 
% \

 \begin{exmp}\label{examp4.0}  {\rm  Consider  \eqref{Chem} with $d(t)\equiv d>0$, $S_0(t)$  not constant and $S_0(t), p_i(t)$
 satisfying (H1)-(H2). With the above notations, from \cite[Lemma 2.3]{Faria23}, we obtain the lower estimate $S^-:=\liminf_{t\to\infty} S(t)\ge m_0$
%\begin{equation}\label{pers_S}\liminf_{t\to\infty} S(t)\ge m_0,\end{equation}
where 
\begin{equation}\label{pers_S}m_0=\min \{s\in [0,\ol{S_0}]: h(s)=d \ul{S_0}\},\end{equation} for $h(s):=ds+(\ol{S_0}-s) \sum_{i=1}^np_i(s)e^{-d\tau_i}.$ Conditions \eqref{pers} and (H3.a) translate respectively as
$$p_i(m_0)\ge de^{d\tau_i}\q {\rm and}\q
\dps\min_{t\in [0,\omega]}  p_i( y^*(t))>de^{d\tau_i}$$
for $ i=1,\dots,n,$ and clearly the first inequality implies the second.
Assume next  that $p_i(m_0)\ge de^{d\tau_i}, i=1,\dots,n.$
 Then, for any $n\ge 1$, we have $$h(m_0)>dm_0+nd(\ol{S_0}-m_0)\ge d\ol{S_0}>d\ul{S_0},$$
therefore condition \eqref{pers_S} is not fulfilled.   This suggests the need for other criteria for persistence, leading to a sharper uniform lower bound $S^-$.}\end{exmp}

 \begin{exmp}\label{examp4.1}  {\rm  Consider  \eqref{Chem} with $d(t)\equiv d>0, S_0(t)=a+\sin t$ for some $a>1$, and $p_i(x)$ satisfying (H2).
 The $2\pi$-positive solution of \eqref{y_ode} is
 \begin{equation}\label{y_exam}
 y^*(t)=a+\frac{d}{d^2+1}\big (d\sin t-\cos t\big ).
 \end{equation}
Maxima and minima of $y^*(t)$ are attained at points such that $\sin t=-d\cos t$, from which we deduce that $\max_\R y^*(t)=a+\frac{d}{\sqrt{d^2+1}}, \min_\R y^*(t)=a-\frac{d}{\sqrt{d^2+1}}$. From Theorem \ref{thm2.1}, if
 \begin{equation}\label{exti_exp}p_i\Big(a+\frac{d}{\sqrt{d^2+1}}\Big)\le de^{d\tau_i},\q i=1,\dots,n,\end{equation}
 then the washout $2\pi$-periodic solution $(y^*(t),0,\dots,0)$ is GAS.
 
 We next derive some conditions for uniform persistence in $C_0^+$.
Suppose that there exists $(p_i)_+'(0)=b_i>0$ and that for any $\vare\in (0,1)$ there is $\de \in (0,\frac{a-1}{2})$ such that $p_i(s)\le b_i s$ for $s\in [0,a+\frac{d}{\sqrt{d^2+1}}]$ and
 \begin{equation}\label{p_iEx1}p_i(s)\ge b_i(1-\vare) s \q {\rm for}\q s\in [0,\de], i=1,\dots,n. \end{equation}Let $m_0>0$ be a uniform lower bound for the first coordinate $S(t)$ of positive solutions. From \cite[Lemma 2.3]{Faria23}, we can estimate $m_0$ as the minimum of the positive $s\in [a-\frac{d}{\sqrt{d^2+1}},a+\frac{d}{\sqrt{d^2+1}}]$ such that
$h(s):=ds+(a+\frac{d}{\sqrt{d^2+1}}-s)\sum_{i=1}^n p_i(s)e^{-d\tau_i}=d(a-\frac{d}{\sqrt{d^2+1}}).$ For $h^+(s):=ds\Big [ 1+(a+\frac{d}{\sqrt{d^2+1}}-s)B\Big ]$  where $B=d^{-1}\sum_i b_ie^{-d\tau_i}$, we have $h^+(s)\ge h(s)$ on $[0,a+\frac{d}{\sqrt{d^2+1}}]$.
In particular,   we obtain $m_0\ge m_0^+>0$, where $m_0^+>0$ is such that $h^+(m_0^+)=d(a-\frac{d}{\sqrt{d^2+1}})$, thus
 \begin{equation}\label{m_0}
 m_0\ge \frac{a-\frac{d}{d^2+1}}{1+B (a+\frac{d}{d^2+1})}.\end{equation}%=\frac{a(d^2+1)-d}{(d^2+1)+B [a(d^2+1)+d]}.$$
If
$(b_i-\vare)\min \{\de,m_0\}\ge de^{d\tau_i},  i=1,\dots, n,$
 from \eqref{p_iEx1} it follows that $p_i(m_0)>d e^{d\tau_i},1\le i\le n,$ so 
 \eqref{Chem} is persistent \cite{Faria23}.  
 On the other hand, if
 $$p_i\Big(a-\frac{d}{\sqrt{d^2+1}}\Big)> de^{d\tau_i},\q i=1,\dots,n,$$
 respectively
 $$\int_0^{2\pi}p_i\Big (a+\frac{d}{d^2+1}\big (d\sin t-\cos t\big )\Big )\, dt>e^{d\tau_i}(e^{2\pi d}-1),\q i=1,\dots,n,$$
holds,  then (H3.a), respectively (H3.b), is satisfied, and Theorem \ref{thm3.3} yields the existence of at least $n$ $2\pi$-periodic solutions  $X_1^*(t)=(S_1^*(t),x_1^*(t),0,\dots,0),\dots, X_n^*(t)=(S_n^*(t),0,\dots,0,x_n^*(t))$, with $S_i^*(t),x_i^*(t)>0, i=1,\dots,n, t\in [0,2\pi]$. 
Consider now  the particular case of the example above with $d=1, a=2$, so that \eqref{Chem} is given by
  \begin{equation}\label{ChemEx2}
 \begin{split}
 S'(t)&=2+\sin t-S(t)-\sum_{i=1}^np_i(S(t))x_i(t)\\
x_i'(t)&=-x_i(t)+e^{-\tau_i}p_i(S(t-\tau_i))x_i(t-\tau_i),\q i=1,\dots,n,\\
\end{split}
\end{equation}  
and $p_i(x)=\frac{b_ix}{1+x}$ for some $b_i>0$. 
The $2\pi$-positive solution of \eqref{y_ode} is
 $y^*(t)=2+\frac{1}{2}\big (\sin t-\cos t\big ).$ Note that $p_i(2-\frac{1}{\sqrt 2})=b_i\frac{11-\sqrt 2}{17}$. 
 If 
% $$
%  b_1  \frac{2a-\sqrt 2}{2+2a-\sqrt 2}\ge e^{\tau_1}\q {\rm and}\q  b_j  \frac{2a+\sqrt 2}{2+2a+\sqrt 2}\le e^{\tau_j}\ {\rm for}\ j=2,\dots, n,$$
 $\tau_i<\log b_i+\log (\frac{11-\sqrt 2}{17}) \approx \log b_i-0.5729$ for $i=1,\dots, n,$
 then (H3.a) holds and  \eqref{ChemEx2} has at least $n$  $2\pi$-periodic solutions of the form
  $X_i^*(t):=(S_i^*(t),0,\dots, 0,x_i^*(t), 0,\dots, 0)$ with $0<S_i^*(t)<y^*(t)$ and $x_i^*(t)>0$ ($1\le i\le n$). In particular, if $n=1$ and $\tau_1<\log b_1+\log (\frac{11-\sqrt 2}{17})$, there is a positive $2\pi$-periodic solution $(S(t),x_1(t))$.
  
%  ----
%  [Below: Correct or Delete!]
%  
%  Next, for \eqref{ChemEx2} with $n\ge 2$, rather than the estimate in \eqref{m_0}, we use directly the definition of $h(s)$ above to conclude from \cite[Lemma 2.3]{Faria23}  that the uniform lower bound $m_0\in [2-\frac{1}{\sqrt 2}, 2+\frac{1}{\sqrt 2}]$ for the first coordinate $S(t)$  satisfies $m_0\ge s$ where $s$ is given by the equality 
% $$(1-B)m_0+B(2+\frac{1}{\sqrt{2}})\frac{m_0}{1+m_0}=2-\frac{1}{\sqrt{2}}
% $$
% for $B=\sum_{j=1}^n b_je^{-\tau_j}$.
% 
% For a concrete illustration, consider the case of \eqref{ChemEx2} with $n=2$ and $B=10$. Then $m_0=2+\frac{11}{9\sqrt 2}$ and $\frac{m_0}{1+m_0}\approx 0.7412$. Conditions  \eqref{pers} translate as
%
% 
% $$(B-1)m_0^2+\Big [1-\frac{1}{\sqrt 2}-B(2+\frac{1}{\sqrt 2})\Big ]m_0+2-\frac{1}{\sqrt 2}=0.$$
%  
% 
%  . However, for $n\ge 2$, it is not possible to have $p_i(\frac{11-\sqrt 2}{10B})>e^{\tau_i}$ for $i=1,\dots, n$, since this would imply $b_ie^{-\tau_i} \frac{11-\sqrt 2}{11-\sqrt 2+10B}>1$ for all $i$ and therefore
%  $\sum_{i=1}^nb_ie^{-\tau_i} \frac{11-\sqrt 2}{11-\sqrt 2+10B}=B\frac{11-\sqrt 2}{11-\sqrt 2+10B}>n$, which is not possible.
}
    \end{exmp} 
%\
%
%\
%$p_i\Big(a-\frac{d}{\sqrt{d^2+1}}\Big)> de^{d\tau_i},\q i=1,\dots,n,$
%    
%( 
%    \eqref{Chem} with $d(t)\equiv d>0, S_0(t)=a+\sin t$ for some $a>1$, and $p_i(x)$
% 
% $$b_i(1-\vare)\frac{a(d^2+1)-d}{(d^2+1)+B [a(d^2+1)+d]}>de^{d\tau_i},\q i=1,\dots,n,$$
% 
%% On the other hand, from the definition of $m_0$ and \eqref{S(t)}, we have
%% $$a-\frac{d}{\sqrt{d^2+1}}-m_0\le \sum_{i=1}^n e^{d\tau_i}\liminf_{t\to \infty}x_i(t) \le \sum_{i=1}^n e^{d\tau_i}\limsup_{t\to \infty} x_i(t)\le a+\frac{d}{\sqrt{d^2+1}}-m_0.$$
%% Reasing as in \cite[Lemma 2.3]{Faria23}, we deduce that
%% 
% 
% (continue/change computations... )
% 
%Try with $d=1, a=2$:
%
%
% $s+(1.807-s)\sum_{i=1}^n p_i(s)e^{-\tau_i}= .3929
% \iff (\sum_{i=1}^n p_i(s)e^{-\tau_i}-1)s=- .3929+1.807\sum_{i=1}^n p_i(s)e^{-\tau_i} ??$
%   
 % $$
%  b_1  \frac{2a-\sqrt 2}{2+2a-\sqrt 2}\ge e^{\tau_1}\q {\rm and}\q  b_j  \frac{2a+\sqrt 2}{2+2a+\sqrt 2}\le e^{\tau_j}\ {\rm for}\ j=2,\dots, n,$$

 \section{Conclusions and discussion}
 
 Lately,   periodic chemostat models with delays  have begun receiving  some attention, and the topic of the existence of nonnegative periodic solutions been investigated.  However, authors have mostly considered  the case of a single species \cite{ARS20,ARS20b}, the work \cite{JiaZ} being an exception.  Here,  a periodic delayed chemostat model  with $n$ species in competition given by \eqref{Chem} was studied. System \eqref{Chem} was derived by  implementing a nonautonomous periodic nutrient input and a periodic rate in the classic Ellermeyer autonomous model \cite{Eller}, generalised to $n$ species. For the first time, a systematic study of
a delayed periodic chemostat model with multiple growing  microorganisms in competition is performed.  Questions of extinction of all species, competitive exclusion, persistence and co-existence are addressed in the present paper, nevertheless its major  goal   is to give sufficient conditions  for the existence of nontrivial nonnegative and positive  periodic solutions for  \eqref{Chem}.  Among other results,  criteria providing the existence of  nontrivial periodic solutions are given. 
% %, see the main results Theorems \ref{thm3.3} and \ref{thm4.4}. 
 After a major criterion given in Theorem \ref{thm3.3} for the existence of such $n$ nonnegative solutions, we further explore either comparison results for quasi-monotone FDEs in \cite{Smith} or
 a fixed point theorem in \cite{Zhao}, to establish in Theorems \ref{thm4.2} and \ref{thm4.4} that a strictly positive $\omega$-periodic solution must exist. When applied to the particular case of a single species ($n=1$), this setting largely improves the result in \cite{ARS20}.

 The novelty of the paper relies both on the techniques and results. 
  A key original technique to carry out this investigation is a {\it conservation principle}, established in \cite{Faria23} along the lines in \cite{SmithW}, which allows replacing the $n+1$-dimensional DDE \eqref{Chem} by an $n$-dimensional FDE of mixed type. 
% On the other hand, for the first time a systematic study of
%a delayed periodic chemostat model with multiple growing  microorganisms in competition is performed.  Among other results,  criteria providing the existence of nontrivial periodic solutions are given.
 %, whereas the literature typically  only considers model with a single microorganism.
 
 This line of research and its methodology  also give rise to several open problems to be addressed in the next future. Some of them have already been mentioned throughout the paper. Namely,  from Remark \ref{rmk4.2} and Example \ref{examp4.0} it is apparent that it is relevant to improve the criterion in  \cite[Lemma 2.3]{Faria23}, by providing a sharper uniform lower bound  for the component $S(t)$ of all positive solutions,  consequently leading to better criteria for persistence in $C_0^+$. On the other hand, to study   the partial extinction of some of the competing species, while others uniformly persist, is an important topic and a conjecture has been presented in \cite{Faria23}.

  % (conf. Remark \ref{rmk4.2}).
  
% Firstly, for chemostat systems of the form \eqref{Chem} with $d(t),S_0(t)$  non-periodic but bounded above and below by positive constants:
% 
% (i)  To improve the criterion in \cite[Theorem 3.2]{Faria23} for the global attractivity of washout solutions $(y(t),0,\dots,0)$, with $y(t)$ a positive solution of \eqref{y_ode}, by using refinements of the arguments in \cite{{NahRost}} (conf. Theorem \ref{thm2.1}).
% 
% (ii) To provide a sharper uniform lower bound for $S^-=\liminf_{t\to\infty} S(t)$ in \cite[Lemma 2.3]{Faria23} and
% improve the criterion, leading to better criteria for persistence (conf. Remark \ref{rnk4.2}).
% 
%  . A list of problems and ideas to address are given below
%
%  Conclusions:   
%  - Thm 2.1: refinement of extinction 
%  
%  - Thm 3.2 and Corol: exist of n periodic nonnegative sol of the form...
%  
%  - thm 4.1: competitive exclusion and GA of a periodic sol
%  
%  - Thm 4.2: criterion for exist of a positive periodic sol (not very good)
%  
%  -thm 4.4: better criterion for exist of a positive periodic sol (under persistence + cond of thm 3.2)
%  
%  - examples..
% 
%  Open problems:
%  

  Note that the uniform persistence is a main assumption  in Theorem \ref{thm4.4}, to use  \cite{Zhao} and derive the existence of at least one positive periodic solution $(S^\#(t),x_1^\#(t),\dots,x_n^\#(t))$ of \eqref{Chem}, or equivalently $x^\#(t):=(x_1^\#(t),\dots,x_n^\#(t))$ of \eqref{Chem_y}.
Under the hypotheses of Theorem \ref{thm4.4}, Theorem 2.1 in Zhao's paper  \cite{Zhao} also asserts that \eqref{Chem_y} has a  nonempty, compact and invariant global attractor in $C_0^+$. Whether such  positive $\omega$-periodic solution $x^\#(t)$ is unique and globally attractive in $C_0^+$ is  an interesting  open problem. For the very particular case of a planar periodic ODE  \eqref{Chem0} {\it without delays} (and $S_0(t)\equiv S_0$, $n=2$), the asymptotic behaviour of positive solutions -- including conditions for the global attractivity of a positive equilibrium -- was described in \cite[Chap.~7]{SmithW}, by using Floquet theory for ODEs and monotone theory for competitive and cooperative planar systems. Not only  do these results not  extend directly to DDEs, but also system \eqref{Chem} is far more general. 
Also, contrarily to the approach often used for ODEs or even autonomous DDEs,   the construction of a suitable Lyapunov functional to derive the global attractivity of such a solution $x^\#(t)$ in $C_0^+$  is a priori not feasible, in virtue of   its nonautonomous character. Possible alternative directions include  exploring the work of Mazenc and  Malisoff \cite{MM} on  new constructions of Lyapunov-Krasovskii functionals for nonautonomous DDEs, or the technique of pullback attractors of Caraballo et al. \cite{CHK}. Another challenge is to perform the analysis of extinction, partial extinction, persistence and existence of nonnegative/positive periodic solutions for the generalisation of \eqref{Chem} obtained by considering  periodic delays $\tau_i(t)$, instead of constant delays $\tau_i$. With  time-varying delays $\tau_i(t)$, the main tool of the conservation principle is no longer valid, thus new methods to tackle the problem will need to be developed.

%   [ Possible important references:  
%   
%   $\star$ Use the recent "new cones" of Jianhong Wu
%   
%   $\star$ Use recent papers of Yuming Chen on Lyapunov functionals?]
%
%
%
%
%
%---
%
%From \cite{Faria23}
%
%A key ingredient for the results  is the {\it conservation principle} established here, which allows reducing the $n+1$-dimensional DDE \eqref{Chem} to an $n$-dimensional functional differential equation of mixed type. The uniform persistence of all the species in competition relies on a criterion for persistence of multi-dimensional DDEs in \cite{Faria21}, whereas the persistence of a unique species and extinction of  all the others is derived by using results in \cite{BB16} for scalar DDEs with mixed monotonicity. 
% A relevant conjecture claiming
%  the sustained  survival of a certain number $m \in \{2, \dots, n\}$ of the competitive species and extinction of the remaining $n-m$ is also presented.
%

\end{document}